\numberwithin{equation}{section}
\theoremstyle{plain}
\newtheorem{thm}[equation]{Theorem}
\newtheorem{prop}[equation]{Proposition}
\newtheorem{lem}[equation]{Lemma}
\newtheorem{cor}[equation]{Corollary}
\newtheorem{defn}[equation]{Definition}
\newtheorem*{prop*}{Proposition}
\newtheorem*{thm*}{Theorem}
\newtheorem*{thma*}{Theorem A}
\newtheorem*{thmb*}{Theorem B}
\newtheorem*{thmc*}{Theorem C}
\theoremstyle{remark}
\newtheorem{exm}[equation]{Example}
\newtheorem{rmk}[equation]{Remark}
\newenvironment{enumroman}
{\begin{enumerate}}
{\end{enumerate}}
\DeclareMathOperator{\End}{End}
\DeclareMathOperator{\Hom}{Hom}
\DeclareMathOperator{\nrd}{nrd}
\DeclareMathOperator{\rk}{rk}
\DeclareMathOperator{\Spec}{Spec}
\DeclareMathOperator{\Sym}{Sym}
\DeclareMathOperator{\trd}{trd}
\DeclareMathOperator{\Ten}{Ten}
\DeclareMathOperator{\Tr}{Tr}
\newcommand{\Q}{\mathbb Q}
\newcommand{\R}{\mathbb R}
\newcommand{\Z}{\mathbb Z}
\newcommand{\tbigwedge}{\textstyle{\bigwedge}}
\newcommand{\eps}{\epsilon}
\newcommand{\fraka}{\mathfrak{a}}
\newcommand{\frakb}{\mathfrak{b}}
\newcommand{\frakc}{\mathfrak{c}}
\newcommand{\frakm}{\mathfrak{m}}
\newcommand{\frakp}{\mathfrak{p}}
\newcommand{\quat}[2]{\displaystyle{\biggl(\frac{#1}{#2}\biggr)}}
\newcommand{\la}{\langle}
\newcommand{\ra}{\rangle}
\begin{document}

\title[Characterizing quaternion rings]{Characterizing quaternion rings over an arbitrary base}

\author{John Voight}
\address{Department of Mathematics and Statistics, University of Vermont, 16 Colchester Ave, Burlington, VT 05401, USA}
\email{jvoight@gmail.com}
\date{\today}
\thanks{\emph{Subject classification:} 11R52, 11E20, 11E76}

\begin{abstract}
We consider the class of algebras of rank $4$ equipped with a standard involution over an arbitrary base ring.  In particular, we characterize quaternion rings, those algebras defined by the construction of the even Clifford algebra.
\end{abstract} 

\maketitle

A \emph{quaternion algebra} is a central simple algebra of dimension $4$ over a field $F$.  Generalizations of the notion of quaternion algebra to other commutative base rings $R$ have been considered by Kanzaki \cite{Kanzaki}, Hahn \cite{Hahn}, Knus \cite{KnusQHFR}, and most recently Gross and Lucianovic \cite{GrossLuc}.  In this article, we pursue these generalizations further.

Let $R$ be a commutative Noetherian ring (with $1$).  Let $B$ be an algebra over $R$, an associative ring with $1$ equipped with an embedding $R \hookrightarrow B$ of rings (mapping $1 \in R$ to $1 \in B$) whose image lies in the center of $B$; we identify $R$ with this image in $B$.  Assume further that $B$ is a finitely generated, projective $R$-module of constant rank.

A \emph{standard involution} on $B$ is an $R$-linear (anti-)involution $\overline{\phantom{x}}:B \to B$ such that $x\overline{x} \in R$ for all $x \in B$.  Given an algebra $B$ with a standard involution, we define the \emph{reduced trace} $\trd:B \to R$ by $x \mapsto x+\overline{x}$ and the \emph{reduced norm} $\nrd:B \to R$ by $x \mapsto x\overline{x}$.  Then every element $x \in B$ satisfies the polynomial $\mu(x;T)=T^2-\trd(x)T+\nrd(x)$.

A \emph{free quaternion ring} $B$ over a PID or local ring $R$ is an $R$-algebra of rank $4$ with a standard involution such that the characteristic polynomial $\chi(x;T)$ of left multiplication by $x$ on $B$ is equal to $\mu(x;T)^2=(T^2-\trd(x)T+\nrd(x))^2$.  The result of Gross and Lucianovic is as follows.

\begin{prop*}[{\cite[Proposition 4.1]{GrossLuc}}]
Let $R$ be a PID or local ring.  Then there is a bijection between the space of ternary quadratic forms over $R$ under a twisted action of $GL_3(R)$ and isomorphism classes of free quaternion rings over $R$.
\end{prop*}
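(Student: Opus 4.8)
The plan is to produce explicit mutually inverse maps and then to check that they intertwine the two actions. The forward map is the even Clifford algebra construction: to a ternary quadratic form $Q$ on the free module $M = R^3$ with basis $e_1,e_2,e_3$ I associate $B = C_0(Q)$, with $R$-basis $1, e_2e_3, e_3e_1, e_1e_2$. I would first verify that the Clifford reversal restricts to a standard involution on $C_0(Q)$, so that $\trd$ and $\nrd$ are defined, and compute them on this basis in terms of the coefficients of $Q$. To see that $C_0(Q)$ is a free quaternion ring I must check $\chi(x;T) = \mu(x;T)^2$. Both sides are monic of degree $4$ with coefficients that are universal polynomials in the coordinates of $x$ and the coefficients of $Q$, so it suffices to verify the identity at the generic point of the universal family over $\Z[\text{coeffs}]$. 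There, after base change to the algebraic closure, the even Clifford algebra of the (nondegenerate) generic form is a split quaternion algebra $M_2$; left multiplication on $M_2$ is the direct sum of two copies of the standard $2$-dimensional representation, whose characteristic polynomial is $\mu(x;T)$, whence $\chi(x;T) = \mu(x;T)^2$. By specialization the identity then holds over any base.

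For the reverse map I recover a form from a free quaternion ring $B$. Conceptually $C_0(Q) \cong R \oplus \tbigwedge^2 M$ as $R$-modules, so the ``pure'' rank-$3$ summand of $B$ is naturally $\tbigwedge^2 M$, and the restriction of $\nrd$ to it is the form to be recovered. The main obstacle is that over an arbitrary base the trace-zero submodule $\ker(\trd)$ need not be a rank-$3$ complement to $R\cdot 1$: since $\trd(1) = 2$, this fails precisely when $2$ is not a unit, e.g. in residue characteristic $2$. Using that $R$ is a PID or local ring I would instead pass to a \emph{good basis} $1,i,j,k$ of $B$, read the six structure constants off the reduced products $i^2,j^2,k^2$ and $jk,ki,ij$ (normalized modulo $R\cdot 1$ using $\trd$ and $\nrd$), and assemble them into a ternary form $Q_B$. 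The content to check is that the two constructions are mutually inverse: $C_0(Q_B) \cong B$, and $Q_{C_0(Q)}$ agrees with $Q$ up to the determinant twist addressed below. This is a direct comparison of multiplication tables.

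Finally I would establish equivariance. Applying $g \in GL_3(R)$ to $e_1,e_2,e_3$ carries $Q$ to a new form and correspondingly changes the good basis of $B$. The crucial point is that $g$ acts on the pure part $\tbigwedge^2 M$ through $\tbigwedge^2 g$, and in rank $3$ one has $\tbigwedge^2 g = \det(g)\,(g^{-1})^T$ under the identification $\tbigwedge^2 M \cong M^\vee \otimes \tbigwedge^3 M$. Thus $g$ acts on forms not by the naive substitution $Q \mapsto g^T Q g$ but by that substitution twisted by the factor $\det(g)$ — exactly the twisted action in the statement. Verifying that the change-of-basis action on good bases of $B$ reproduces this twisted $GL_3(R)$-action shows that the bijection descends to a bijection between $GL_3(R)$-orbits of ternary forms and isomorphism classes of free quaternion rings. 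The residual work is bookkeeping: confirming that good bases exist over a PID or local ring, that they transform as claimed, and pinning down the precise determinant twist.
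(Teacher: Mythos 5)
Your forward map is fine: since the even Clifford construction and characteristic polynomials both commute with base change, and the universal coefficient ring $\Z[a,b,c,u,v,w]$ is a domain over whose fraction field the generic form is nondegenerate, checking $\chi(x;T)=\mu(x;T)^2$ for $M_2$ over an algebraically closed field and specializing is a valid (and clean) argument. The gaps are in the reverse direction. The claim that the form is recovered as the restriction of $\nrd$ to the pure part $\tbigwedge^2 M \subset C^0(q)$ is false, and the failure has nothing to do with $2$: for a diagonal form $q=\la a,b,c\ra$ that restriction is the adjoint form $\la bc,ac,ab\ra$, and on trace-zero elements the norm computes $D(q)\,q$, where $D(q)$ is the half-discriminant. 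This recovers $q$ only when $D(q)$ is a unit (the semiregular case); for $q(x,y,z)=x^2$ the reduced norm vanishes identically on the pure part of $C^0(q)$, so nothing at all is recovered. This is precisely the ``trace zero method'' pitfall, and it persists when $2$ is invertible, so your diagnosis (that the obstacle is $\ker(\trd)$ failing to complement $R\cdot 1$) misses the real issue. The object that actually inverts the Clifford functor is the canonical exterior form $\phi_B(x\wedge y)=1\wedge x\wedge y\wedge xy$, a quadratic map $\tbigwedge^2(B/R)\to\tbigwedge^4 B$; note its domain is $\tbigwedge^2(B/R)\cong M\otimes\tbigwedge^3 M$, not $B/R\cong\tbigwedge^2 M$. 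That shift by one exterior power is also what your equivariance step needs: an algebra isomorphism acting on $B/R$ by a matrix $H$ acts on $\tbigwedge^2(B/R)$ by $\tbigwedge^2 H=\det(H)\,(H^{-1})^{T}$ and on the values $\tbigwedge^4 B\cong\tbigwedge^3(B/R)$ by $\det(H)$, and it is the combination of these two that produces substitution twisted by a single factor of $\det$. A form living on $B/R$ itself, as in your setup, would transform with a $\det^2$ twist (and would be the adjoint form besides), so as written your derivation of the twisted action does not go through.

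Second, ``read the six structure constants off the reduced products'' presupposes the conclusion. For an arbitrary rank-$4$ algebra with a standard involution, a good basis yields multiplication laws with \emph{nine} constants $a,b,c,u,v,w,u',v',w'$ subject to relations (the paper's laws ($U$)), and the entire content of the reverse direction is that the quaternion hypothesis forces $u'=v'=w'=0$, i.e., laws ($Q$). This is exactly where $\chi(x;T)=\mu(x;T)^2$ must be used: one computes for $\xi=xi+yj+zk$ that $\Tr_L(\xi)=2\trd(\xi)+(u'x+v'y+w'z)$, so comparing the $T^3$-coefficients of $\chi_L(\xi;T)$ and $\mu(\xi;T)^2$ at $\xi=i,j,k$ gives $u'=v'=w'=0$. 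Without this step your ``direct comparison of multiplication tables'' cannot begin, because the six constants you extract do not determine $B$. In short, the two ingredients missing from your plan are exactly the two pillars of the paper's treatment: the canonical exterior form as the basis-free inverse to the Clifford functor, and the structure theory of good bases (laws ($U$) and their reduction to ($Q$)).
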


In this correspondence, one associates to a ternary quadratic form $q$ the even Clifford algebra $C^0(q)$.  (Here, the usual action of $GL_3(R)$ on quadratic forms is twisted by the determinant; see Proposition \ref{GrossLucN3} for more details.)  In this article, we generalize this result and treat an arbitrary commutative base ring $R$.  

A \emph{ternary quadratic module} is a triple $(M,I,q)$ where $M$ is a projective $R$-module of rank $3$, $I$ is an invertible $R$-module (projective of rank $1$), and $q:M \to I$ is a quadratic map.  To a ternary quadratic module $(M,I,q)$, one can associate the \emph{even Clifford algebra} $C^0(M,I,q)$, an $R$-algebra of rank $4$ with standard involution.  A \emph{quaternion ring} is an $R$-algebra $B$ such that there exists a ternary quadratic module $(M,I,q)$ with $B \cong C^0(M,I,q)$.

Conversely, to an $R$-algebra $B$ of rank $4$ with a standard involution, we associate the quadratic map 
\[ \phi_B:\tbigwedge^2 (B/R) \to \tbigwedge^4 B \]
which is uniquely characterized by the formula
\[ \phi_B(x \wedge y) = 1 \wedge x \wedge y \wedge xy \]
for $x,y \in B$.  We call $\phi_B$ the \emph{canonical exterior form}; it can be found in a footnote of Gross and Lucianovic \cite{GrossLuc} and is inspired by the case of commutative quartic rings, investigated by Bhargava \cite{BhargavaQuartic}.

An \emph{exceptional ring} is an $R$-algebra $B$ (of rank $4$) with the property that there is a left ideal $M \subset B$ with $B=R \oplus M$ such that the left multiplication map $M \to \End_R(M)$ factors through a linear map $t:M \to R$, i.e.\ $xy = t(x)y$ for $x,y \in M$.  The map $x \mapsto t(x)-x$ for $x \in M$ extends to a standard involution on $B$.  

Our first main result is as follows.

\begin{thma*}
Let $B$ be an $R$-algebra of rank $4$ with a standard involution.  Then the following are equivalent.
\begin{enumroman}
\item $B$ is a quaternion ring, i.e., there exists a ternary quadratic module $(M,I,q)$ such that $B \cong C^0(M,I,q)$;
\item For all $x \in B$, the characteristic polynomial of left (or right) multiplication by $x$ on $B$ is equal to $\mu(x;T)^2$;
\item For all $x \in B$, the trace of left (or right) multiplication by $x$ on $B$ is equal to $2\trd(x)$.
\end{enumroman}
If $R$ is reduced and $2$ is a nonzerodivisor in $R$, then these are further equivalent to the following.
\begin{enumroman}
\item[(iv)] For all maximal ideals $\frakm$ of $R$, the reduction of the canonical exterior form 
\[ \phi_{B,\frakm} : \tbigwedge^2 (B/R) \otimes_R R/\frakm \to \tbigwedge^4 B \otimes R/\frakm \] 
is zero if and only if $B \otimes R/\frakm$ is commutative. 
\end{enumroman}

$B$ is an exceptional ring if and only if the canonical exterior form $\phi_B$ is identically zero.

Moreover, there exists a unique decomposition $\Spec R_Q \cup \Spec R_E \hookrightarrow \Spec R$ such that the restriction $B_{R_Q}=B \otimes_R R_Q$ is a quaternion ring, $B_{R_E}$ is an exceptional ring, and $\Spec R_Q$ and $\Spec R_E$ are the largest (closed) subschemes with these properties.  If $R$ is reduced, then the map $R \to R_Q \times R_E$ is surjective.  
\end{thma*}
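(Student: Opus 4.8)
The plan is to realize $R_Q$ and $R_E$ as explicit quotients $R/J_Q$ and $R/J_E$ for two canonically defined ideals, so that the existence, the maximality, and the uniqueness of the decomposition all follow formally. For the quaternion locus I would use the equivalence (i)$\Leftrightarrow$(iii): the assignment $\delta\colon B\to R$, $\delta(x)=\Tr(L_x)-2\trd(x)$, where $L_x$ denotes left multiplication by $x$, is $R$-linear, and since $\delta(rx)=r\,\delta(x)$ its image $J_Q:=\delta(B)$ is an ideal of $R$. Characteristic polynomials of matrices and the reduced trace are stable under base change, so $\delta_{R/J}=\delta\otimes_R R/J$ for every quotient $R\to R/J$; hence $B_{R/J}$ satisfies (iii), i.e.\ is a quaternion ring, precisely when $\delta(B)\subseteq J$, that is $J_Q\subseteq J$. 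Thus $R_Q:=R/J_Q$ is the smallest such quotient, and $\Spec R_Q\hookrightarrow\Spec R$ is the largest closed subscheme over which $B$ is a quaternion ring.

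For the exceptional locus I would use the characterization that $B$ is exceptional if and only if $\phi_B=0$. The form $\phi_B$ is functorial, being fixed by the universal identity $\phi_B(x\wedge y)=1\wedge x\wedge y\wedge xy$, so $\phi_{B_{R/J}}=\phi_B\otimes R/J$. Let $P$ be the (projective) module of $\tbigwedge^4 B$-valued quadratic forms on $\tbigwedge^2(B/R)$, so that $\phi_B\in P$; trivializing $\tbigwedge^4 B$ and $\tbigwedge^2(B/R)$ locally exhibits the coefficients of $\phi_B$, which patch to a content ideal $J_E\subseteq R$, namely the image of the evaluation-at-$\phi_B$ map $P^\vee\to R$, equivalently the smallest ideal with $\phi_B\in J_E\cdot P$. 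Then $\phi_B\otimes R/J=0$ iff $J_E\subseteq J$, so $R_E:=R/J_E$ is the largest quotient over which $\phi_B$ vanishes, i.e.\ over which $B$ is exceptional. Because $J_Q$ and $J_E$ are the unique smallest ideals with their defining properties, the decomposition is unique.

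The substance of the final clause is that these two closed subschemes cover $\Spec R$, i.e.\ $J_Q\cap J_E\subseteq\operatorname{nil}(R)$, equivalently $V(J_Q)\cup V(J_E)=\Spec R$. I would verify this one fibre at a time. Fix a prime $\frakp$, set $k=\kappa(\frakp)$ and $A=B\otimes_R k$, a rank-$4$ $k$-algebra with standard involution, and observe that $J_E\subseteq\frakp$ iff $\phi_A=0$ while $J_Q\subseteq\frakp$ iff $\delta_A=0$. It therefore suffices to prove the field-level implication: if $\phi_A\neq0$ then $\delta_A=0$. Concretely, $\phi_A\neq0$ produces $x,y\in A$ with $1,x,y,xy$ linearly independent, hence a $k$-basis; feeding the quadratic relations $x^2=\trd(x)x-\nrd(x)$ forced by the standard involution into the multiplication table in this basis, one checks by direct computation that $\Tr(L_w)=2\trd(w)$ for all $w\in A$, so $A$ is a quaternion ring. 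This structural dichotomy over a field — where the rigidity of a rank-$4$ standard involution is actually exploited — is the step I expect to be the main obstacle; the rest is formal base-change bookkeeping.

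Finally, over a reduced $R$ the inclusion $J_Q\cap J_E\subseteq\operatorname{nil}(R)=0$ sharpens to $J_Q\cap J_E=0$. Since the kernel of $R\to R_Q\times R_E=R/J_Q\times R/J_E$ is exactly $J_Q\cap J_E$, this map is injective and realizes $R$ as the ring of pairs agreeing over the overlap $R/(J_Q+J_E)$, which is the promised covering of $\Spec R$ by its quaternion and exceptional loci. When these loci are moreover disjoint, i.e.\ $J_Q+J_E=R$, the Chinese Remainder Theorem upgrades this to a genuine product decomposition $R\cong R_Q\times R_E$.
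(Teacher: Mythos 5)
Your proposal proves only the final clauses of Theorem A --- the existence, maximality, and uniqueness of the decomposition $\Spec R_Q \cup \Spec R_E \hookrightarrow \Spec R$, and the covering statement when $R$ is reduced --- while assuming precisely the equivalences that form the bulk of the theorem. You invoke (i)$\Leftrightarrow$(iii) to identify $\Spec R/J_Q$ with the quaternion locus, and you invoke the clause ``$B$ is exceptional if and only if $\phi_B \equiv 0$'' to identify $\Spec R/J_E$ with the exceptional locus; both are assertions of Theorem A itself, so as a proof of the full statement the argument is circular. In the paper these are Proposition \ref{mondoequiv} and Proposition \ref{exceptfree}, and neither is formal: both rest on the structure theory of free rank-$4$ algebras with standard involution (the good-basis laws ($U$) and relations (\ref{uuvvww}) of Proposition \ref{grossluc}), and the implication (iii)$\Rightarrow$(i) additionally requires passing from ``locally a Clifford algebra'' to ``globally a Clifford algebra,'' which is supplied by the reconstruction $C^0(B) \cong B$ inside the proof of Theorem \ref{mainthmcorr}. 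Item (iv) is not addressed at all; in the paper it follows from the observation that an algebra which is both a quaternion ring and an exceptional ring is commutative. These missing pieces are exactly where the rank-$4$ hypothesis does its work, so they cannot be dismissed as base-change bookkeeping.

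The part you do prove is correct, and it takes a genuinely different, arguably cleaner, route than the paper. Proposition \ref{closedexcept} constructs $R_Q$ and $R_E$ by covering $\Spec R$ with opens on which $B$ has a good basis, taking the ideals $(u',v',w')$ and $(a,b,c,u'-u,v'-v,w'-w)$, and gluing; your intrinsic ideals $J_Q=\delta(B)$ and $J_E$ the content ideal of $\phi_B$ avoid gluing entirely and make maximality, uniqueness, and base-change compatibility formal. Your field-level dichotomy is also a genuine alternative: Lemma \ref{ifreduced} cites Remark \ref{uuvvwwdomain}, i.e.\ Lucianovic's relations, whereas you use that every $2$-vector on a rank-$3$ space over a field is decomposable to extract from $\phi_A \neq 0$ a basis $1,x,y,xy$ of $A$, and the identity $\Tr(L_w)=2\trd(w)$ can indeed be verified directly on that basis: using $yx = -xy + \trd(y)x + \trd(x)y - \trd(x\overline{y})$ together with $\trd(x\overline{y})=\trd(x)\trd(y)-\trd(xy)$, one computes $\Tr(L_x)=2\trd(x)$, $\Tr(L_y)=2\trd(y)$, and $\Tr(L_{xy})=2\trd(xy)$, so your claimed computation does close and bypasses the classification ($U$) for this step. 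Finally, note that what you establish for reduced $R$ is injectivity of $R \to R_Q \times R_E$, i.e.\ $J_Q \cap J_E = 0$; this is also what the paper's Lemma \ref{ifreduced} actually proves (zero kernel), and it is the correct geometric reading of the theorem's word ``surjective'' (the two closed subschemes cover $\Spec R$) --- literal surjectivity of the ring map would force $J_Q + J_E = R$, which fails whenever the two loci intersect.
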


For any prime $\frakp \in \Spec R_Q \cap \Spec R_E$, we have $B \otimes R/\frakp \cong (R/\frakp)[i,j,k]/(i,j,k)^2$.

Our second main result addresses the following issue: although by definition the association $(M,I,q) \mapsto C^0(M,I,q)$ yields all isomorphism classes of quaternion rings, it does not yield a bijection over a general commutative ring $R$.  However, we may recover a bijection by rigidifying the situation as follows.  A \emph{parity factorization} of an invertible $R$-module $N$ is an $R$-module isomorphism 
\[ p:P^{\otimes 2} \otimes Q \xrightarrow{\sim} N \]
where $P,Q$ are invertible $R$-modules.  Our second main result is the following (Theorem \ref{mainthmcorr}).

\begin{thmb*}
There is a bijection
\begin{equation*}
\left\{ \begin{minipage}{28ex} 
\begin{center}
Isometry classes of ternary \\
quadratic modules $(M,I,q)$ \\
over $R$
\end{center} 
\end{minipage}
\right\} \longleftrightarrow \left\{ \begin{minipage}{39ex} 
\begin{center}
Isomorphism classes of quaternion \\
rings $B$ over $R$ equipped with a parity \\
factorization $p:P^{\otimes 2} \otimes Q \xrightarrow{\sim} \tbigwedge^4 B$
\end{center} 
\end{minipage}
\right\}
\end{equation*}
which is functorial with respect to the base ring $R$.  In this bijection, the isometry class of a quadratic module $(M,I,q)$ maps to the isomorphism class of the quaternion ring $C^0(M,I,q)$ equipped with the parity factorization
\begin{equation*}
(\tbigwedge^3 M \otimes (I\spcheck)^{\otimes 2})^{\otimes 2} \otimes I \xrightarrow{\sim} \tbigwedge^4 C^0(M,I,q).
\end{equation*}
\end{thmb*}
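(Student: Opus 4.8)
The plan is to produce an explicit inverse to the forward map $F\colon (M,I,q)\mapsto (C^0(M,I,q),p_{M,I,q})$ and to check that the two composites are the identity on isometry/isomorphism classes. The candidate inverse $G$ is read off from the canonical exterior form. Given a quaternion ring $B$ with parity factorization $p\colon P^{\otimes2}\otimes Q\xrightarrow{\sim}\tbigwedge^4 B$, I set $(M,I,q)=G(B,p)$ to be the twist of the triple $(\tbigwedge^2(B/R),\tbigwedge^4 B,\phi_B)$ by the invertible module $P\spcheck$, followed by the identification of the target supplied by $p$. Concretely $M=\tbigwedge^2(B/R)\otimes P\spcheck$, $I=Q$, and $q(\omega\otimes\xi)=(\xi\otimes\xi)\,\lrcorner\,p^{-1}(\phi_B(\omega))\in Q$, where $(\xi\otimes\xi)\in(P\spcheck)^{\otimes2}$ is contracted against the $P^{\otimes2}$--factor of $p^{-1}(\phi_B(\omega))\in P^{\otimes2}\otimes Q$. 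Both $F$ and $G$ are well defined on classes because every ingredient — the even Clifford algebra, the exterior powers $\tbigwedge^i$, the form $\phi_B$, and the modules $P,Q$ — is functorial, so isometries and ring isomorphisms compatible with the parity data are carried along.

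The heart of the argument is a single local computation describing $G\circ F$ on underlying quadratic modules. I claim there are natural isomorphisms
\[
\tbigwedge^2\bigl(C^0(M,I,q)/R\bigr)\cong M\otimes P,\qquad \tbigwedge^4 C^0(M,I,q)\cong I\otimes P^{\otimes2},\qquad P=\tbigwedge^3 M\otimes (I\spcheck)^{\otimes2},
\]
under which the canonical exterior form $\phi_{C^0(M,I,q)}$ corresponds to $q$ tensored with the squaring map $P\to P^{\otimes2}$. To prove this I localize so that $M$ is free with basis $e_1,e_2,e_3$ and $I=R$; then $C^0(M,I,q)$ has the standard basis $1,e_1e_2,e_1e_3,e_2e_3$, and I evaluate $\phi_B(x\wedge y)=1\wedge x\wedge y\wedge xy$ directly on the induced basis of $\tbigwedge^2(B/R)$, reading off both the module identifications and the matching with $q$. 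The main obstacle is not the single local model but verifying that the resulting identifications are independent of the chosen trivialization and hence glue: I resolve this by exhibiting the isomorphisms as natural transformations built only from $\phi_B$ and the multiplication of $B$, so that compatibility with changes of local trivialization is automatic and the local pieces patch to global isomorphisms of modules and of quadratic maps.

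With this computation in hand I record the \emph{twist invariance} of the even Clifford algebra: for an invertible module $\mathcal{L}$ there is a natural isomorphism $C^0(M\otimes\mathcal{L},\,I\otimes\mathcal{L}^{\otimes2},\,q)\cong C^0(M,I,q)$, again a direct local check on generators. Now $G\circ F\cong\mathrm{id}$ is immediate from the displayed isomorphisms: untwisting $M\otimes P$ by $P\spcheck$ and using $p$ to identify the target returns $(M,I,q)$ up to isometry. For $F\circ G\cong\mathrm{id}$, the description of $G$ exhibits $(M,I,q)=G(B,p)$ as the twist of $(\tbigwedge^2(B/R),\tbigwedge^4 B,\phi_B)$ by $P\spcheck$; twist invariance then gives $C^0(M,I,q)\cong C^0(\tbigwedge^2(B/R),\tbigwedge^4 B,\phi_B)$, and the crux computation — applied to any presentation $B\cong C^0(M_1,I_1,q_1)$, available by Theorem A since $B$ is a quaternion ring — identifies the latter with $B$ itself. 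It remains to match parity factorizations: using the determinant identity $\tbigwedge^3(\tbigwedge^2 V)\cong(\tbigwedge^3 V)^{\otimes2}$ for $V$ of rank $3$ together with $\tbigwedge^3(B/R)\cong\tbigwedge^4 B$, one computes for $(M,I,q)=G(B,p)$ that $\tbigwedge^3 M\otimes(I\spcheck)^{\otimes2}\cong P$ compatibly with $p$, so the canonical parity factorization attached to $C^0(M,I,q)$ is carried to $p$. This is the one genuinely delicate piece of bookkeeping, since it requires all of the canonical isomorphisms above to be mutually compatible.

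Finally, functoriality in $R$ is formal: the even Clifford algebra, the exterior powers, duals and tensor products, and the canonical exterior form all commute with base change $R\to R'$, and the isomorphisms entering $F$, $G$, and the two composites were constructed as natural transformations; hence the bijection commutes with the induced maps on isometry and isomorphism classes. The decisive and most demanding step throughout is the local even Clifford computation of the second paragraph, both the calculation itself and the verification that its output is natural enough to globalize.
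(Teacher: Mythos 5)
Your proposal is correct, and its core coincides with the paper's own proof: your inverse $G$ is exactly the paper's construction $\phi_B \otimes p^{*}$ (twist the canonical exterior form by $P\spcheck$ and use $p$ to identify the target with $Q$); your ``crux'' local computation is the paper's combination of Lemma \ref{wedgewedge} with Example \ref{canonextfree}, identifying $\tbigwedge^2(C^0(M,I,q)/R) \cong M \otimes \tbigwedge^3 M \otimes (I\spcheck)^{\otimes 2}$ and $\tbigwedge^4 C^0(M,I,q) \cong (\tbigwedge^3 M)^{\otimes 2} \otimes (I\spcheck)^{\otimes 3}$ and matching $\phi_{C^0(M,I,q)}$ with $q$; and your parity bookkeeping and base-change argument are the same as the paper's. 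The one genuine structural difference is how you close the direction $F\circ G \cong \mathrm{id}$. The paper shows that $C^0(\tbigwedge^2(B/R)\otimes P\spcheck, Q, \phi_B\otimes p^*)/R \cong B/R$ canonically and then argues this module isomorphism lifts to a \emph{unique} $R$-algebra isomorphism, verified locally on a good basis via the map (\ref{BtoC}); you instead invoke twist invariance of the even Clifford functor together with a presentation $B \cong C^0(M_1,I_1,q_1)$ and recycle the crux computation. Your route is more economical, reusing the $G\circ F$ calculation at the cost of one extra (standard, and easily checked from (\ref{whoocliffnelly})) lemma, while the paper's unique-lifting argument is self-contained and yields a canonical isomorphism without routing through a choice of presentation; either suffices for a bijection of classes, since functoriality follows once the natural forward map $F$ is known to be bijective over every base. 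Two minor slips to fix: the presentation $B \cong C^0(M_1,I_1,q_1)$ is available by the \emph{definition} of a quaternion ring, not by Theorem A; and twist invariance should be stated for the twisted form, $C^0(M\otimes\mathcal{L},\, I\otimes\mathcal{L}^{\otimes 2},\, q\otimes \mathrm{sq}_{\mathcal{L}}) \cong C^0(M,I,q)$, where $\mathrm{sq}_{\mathcal{L}}:\mathcal{L}\to\mathcal{L}^{\otimes 2}$ is the squaring map, rather than with $q$ unchanged.
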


Theorem B globalizes the result of Gross and Lucianovic (see Remark \ref{reducetogrossluc}).  This result also compares to work of Balaji \cite{Balaji}, who takes an alternative perspective.  

This article is organized as follows.  We begin (\S 1) by very briefly reviewing some preliminary material concerning algebras with standard involution and exceptional rings as well as ternary quadratic modules and Clifford algebras.   We then define the canonical exterior form (\S 2).  Using this form, we prove the equivalence in Theorem B (\S 3).  We then prove Theorem A (\S 4) and conclude by discussing some consequences.

The author would like to thank Asher Auel, Manjul Bhargava, Brian Conrad, Noam Elkies, Jordan Ellenberg, Jon Hanke, Hendrik Lenstra, and Raman Parimala for their suggestions and comments which helped to shape this research.  We are particularly indebted to Melanie Wood who made many helpful remarks and corrections.  This project was partially
supported by the National Science Foundation under Grant No.\ DMS-0901971.

\section{Involutions and exceptional rings}

Throughout this article, let $R$ be a commutative ring and let $B$ be an algebra over $R$, which as in the introduction is defined to be an associative ring with $1$ equipped with an embedding $R \hookrightarrow B$ of rings.  We assume further that $B$ is finitely generated, projective $R$-module of constant rank.  For a prime $\frakp$ of $R$, we denote by $R_\frakp$ the localization of $R$ at $\frakp$; we abbreviate $B_\frakp = B \otimes_R R_\frakp$ and for $x \in B$ we write $x_\frakp = x \otimes 1 \in B_\frakp$.

\begin{rmk}
We choose to work over base rings, but one could extend our results to an arbitrary (separated) base scheme by the usual patching arguments when one restricts to algebras which are locally free over the base.
\end{rmk}

In this section, we give a summary of the results we will use concerning rings with a standard involution \cite{Voightlowrank} and the construction of the Clifford algebra.

\begin{defn}
An \emph{involution (of the first kind)} $\overline{\phantom{x}}:B \to B$ is an $R$-linear map which satisfies:
\begin{enumroman}
\item $\overline{1}=1$, 
\item $\overline{\phantom{x}}$ is an anti-automorphism, i.e., $\overline{xy}=\overline{y}\,\overline{x}$ for all $x,y \in B$, and
\item $\overline{\overline{x}}=x$ for all $x \in B$.  
\end{enumroman}

An involution $\overline{\phantom{x}}$ is \emph{standard} if $x\overline{x} \in R$ for all $x \in B$.  
\end{defn}

Let $\overline{\phantom{x}}:B \to B$ be a standard involution on $B$.  We define the \emph{reduced trace} $\trd:B \to R$ by $\trd(x)=x + \overline{x}$ and the \emph{reduced norm} $\nrd:B \to R$ by $\nrd(x)=\overline{x}x$ for $x \in B$.  We have $x^2-\trd(x)x+\nrd(x)=0$ for all $x \in B$.  For $x,y \in B$ we have
\begin{equation} \label{xyyx}
xy+yx=\trd(y)x+\trd(x)y+\nrd(x+y)-\nrd(x)-\nrd(y).
\end{equation}

A \emph{quadratic} (rank $2$) $R$-algebra has a unique standard involution, and consequently if $B$ has a standard involution, then this involution is unique.

\begin{defn} \label{exceptdefn}
An $R$-algebra $B$ is an \emph{exceptional ring} if there is a left ideal $M \subset B$ such that $B=R \oplus M$ and the map $M \to \End_R(M)$ given by left multiplication factors through a linear map $t:M \to R$.
\end{defn}

The map $\overline{\phantom{x}}:M \to M$ by $x \mapsto t(x)-x$ defines a standard involution on $B$ and by uniqueness $t=\trd$ on $M$.  

\begin{lem} \label{exceptlocalold}
If $B$ is an exceptional ring, then the splitting $B=R \oplus M$ which realizes $B$ as an exceptional ring is unique.  Moreover, $B$ is exceptional if and only if $B_\frakp$ is exceptional for all primes $\frakp$ of $R$.
\end{lem}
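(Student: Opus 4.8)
The plan is to reduce both assertions to understanding the \emph{augmentation} attached to an exceptional structure. Suppose $B = R \oplus M$ realizes $B$ as an exceptional ring, with $t : M \to R$ and $xy = t(x)y$ for $x,y \in M$. First I would note that $M$ is in fact two-sided: for $x \in M$ and $b = a + m \in R \oplus M$ one has $xb = ax + t(x)m \in M$, so $M$ is also a right ideal. Hence the projection $\pi : B \to B/M \cong R$ is a ring homomorphism splitting $R \hookrightarrow B$, with $\pi|_R = \id$ and $t = \trd|_M$. Writing $x = \pi(x) + (x - \pi(x))$, using $\trd(x) = 2\pi(x) + \trd(x - \pi(x))$, and expanding $xy$ then yields the multiplication formula
\begin{equation*}
xy = \pi(y)\,x + (\trd(x) - \pi(x))\bigl(y - \pi(y)\bigr) \qquad (x,y \in B),
\end{equation*}
so that the entire ring structure of $B$ is encoded by the single functional $\pi$ together with the (intrinsic) reduced trace. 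This formula is the engine for both claims.

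For uniqueness, suppose $\pi,\pi'$ are the augmentations attached to two exceptional structures $B = R \oplus M = R \oplus \tilde M$; since the standard involution is unique, both satisfy the displayed formula with the \emph{same} $\trd$. Setting $\delta = \pi' - \pi$ (so $\delta|_R = 0$) and letting $v : B \to M$, $v(x) = x - \pi(x)$, denote the projection, I would equate the two formulas and project onto $M$ to obtain the proportionality relation
\begin{equation*}
\delta(y)\,v(x) = \delta(x)\,v(y) \qquad (x,y \in B).
\end{equation*}
As $M = \tilde M$ may be checked after localizing, I may assume $R$ is local, so $B$ is free and $\{\pi, v_1, v_2, v_3\}$ is the basis of $B\spcheck$ dual to a basis $\{1, f_1, f_2, f_3\}$ adapted to $B = R \oplus M$. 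Since $\delta(1)=0$ we may write $\delta = \sum_j c_j v_j$, and the proportionality relation says precisely that $\delta \wedge v_k = 0$ in $\tbigwedge^2 B\spcheck$ for each $k$; expanding in the basis $\{v_j \wedge v_k\}$ forces $c_j = 0$ for all $j \neq k$. Ranging over $k$ gives $\delta = 0$, hence $\pi = \pi'$ and $M = \tilde M$.

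For the local criterion, the forward implication is immediate, since localization carries an exceptional structure on $B$ to one on $B_\frakp$. For the converse, I would use that a left-exceptional augmentation is cut out over each stalk by \emph{finitely many} conditions (multiplicativity and the factoring relation, checked on a basis), so it spreads out: each prime $\frakp$ has a basic open neighborhood over which $B$ admits such an augmentation. By the uniqueness just proved these local augmentations agree on overlaps, so they glue to a global $\pi \in \Hom_R(B,R)$, and multiplicativity and the factoring relation hold globally because they hold after every localization. Finally $M = \ker \pi$ is a direct summand of rank $3$ (the sequence $0 \to M \to B \to R \to 0$ splits as $R$ is projective), so $B = R \oplus M$ is an exceptional ring.

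The main obstacle is the uniqueness step over a non-reduced base: the naive idea of recovering $\pi$ from traces of multiplication introduces a factor of $2$ (one finds that the trace of right multiplication by $x$ equals $\trd(x) + 2\pi(x)$), which is useless when $2$ is a zero-divisor, while reduction modulo primes only pins down $\pi$ up to the nilradical. The multiplication formula combined with the exterior-algebra argument circumvents this and is exactly what makes uniqueness hold over an arbitrary base; the gluing in the local criterion is then routine once uniqueness is in hand.
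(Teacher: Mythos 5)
The paper never proves this lemma: it is stated as imported background from the companion paper \cite{Voightlowrank} (``Rings of low rank with a standard involution''), so there is no internal proof to compare against, and your argument must stand on its own. It essentially does. The key observations all check out: $M$ is automatically a two-sided ideal, so an exceptional structure is equivalent to an augmentation $\pi:B \to R$ with $\pi|_R=\id$; the multiplication formula $xy = \pi(y)x + (\trd(x)-\pi(x))(y-\pi(y))$ follows by direct expansion using $t=\trd|_M$ and uniqueness of the standard involution; subtracting the two formulas and projecting onto $M$ does yield $\delta(y)v(x)=\delta(x)v(y)$; and the dual-basis wedge computation then forces $\delta=0$ over a local base, whence $\pi=\pi'$ and $M=\ker\pi=\ker\pi'=\tilde M$. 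Your closing trace identity $\Tr_R(x)=\trd(x)+2\pi(x)$ is also correct. It is worth noting that the wedge step is precisely where rank $4$ enters: you need two indices $j \neq k$, i.e., $\rk M \geq 2$. For quadratic algebras the uniqueness assertion is false --- e.g., $B=(\Z/6\Z)[f]/(f^2-3f)$ splits both as $R \oplus Rf$ and as $R \oplus R(f-3)$ --- so a sentence flagging this would strengthen the write-up.

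The one step stated too quickly is in the converse of the local criterion. After gluing the local augmentations to a global $\pi \in \Hom_R(B,R)$ and setting $M=\ker\pi$, you say ``the factoring relation holds globally because it holds after every localization.'' But the factoring relation is an \emph{existence} statement --- there exists a linear $t:M \to R$ with $xy=t(x)y$ --- and existence does not descend from localizations by itself; you must produce the global $t$. Two repairs, both using tools you already have: (i) glue the local maps $t_\alpha=\trd|_{M_\alpha}$ exactly as you glued the $\pi_\alpha$, noting they agree on overlaps by the uniqueness statement (the standard involution, hence its trace, is unique); or (ii) rephrase the condition as the assertion that left multiplication $M \to \End_R(M)$ has image inside the submodule $R\cdot\id_M$, which \emph{is} a local condition since $\End_R(M)$ and its quotient by $R\,\id_M$ commute with localization for the finitely presented module $M$; then extract $t$ from the isomorphism $R \xrightarrow{\sim} R\,\id_M$, the map $r \mapsto r\,\id_M$ being injective because $M$ is locally free of positive rank. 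With either patch the proof is complete.
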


We now turn to a discussion of Clifford algebras.  Let $M,N$ be projective (finitely generated) $R$-modules.  A \emph{quadratic map} is a map $q:M \to N$ satisfying:
\begin{enumroman}
\item $q(rx)=r^2q(x)$ for all $r \in R$ and $x \in M$; and
\item The map $T:M \times M \to N$ defined by
\[ T(x,y) = q(x+y)-q(x)-q(y) \]
is $R$-bilinear.
\end{enumroman}
Condition (ii) is equivalent to
\begin{equation} \label{qxyz}
q(x+y+z)=q(x+y)+q(x+z)+q(y+z)-q(x)-q(y)-q(z)
\end{equation}
for all $x,y,z \in M$.  A quadratic map $q: M \to N$ is equivalently a section of $\Sym^2 M\spcheck \otimes N$; see Wood \cite[Chapter 2]{MWood} for further discussion.  Note that $T(x,x)=2q(x)$ so if $2$ is a nonzerodivisor in $R$ we may recover $q$ from $T$.

A \emph{quadratic module} over $R$ is a triple $(M,I,q)$ where $M,I$ are projective $R$-modules with $\rk(I)=1$ and $q:M\to I$ is a quadratic map.  An \emph{isometry} between quadratic modules $(M,I,q)$ and $(M',I',q')$ is a pair of $R$-module isomorphisms $f:M \xrightarrow{\sim} M'$ and $g:I \xrightarrow{\sim} I'$ such that $q'(f(x))=g(q(x))$ for all $x \in M$, i.e., such that the diagram
\[
\xymatrix{
M  \ar[r] \ar[d]^{f}_{\wr} & I \ar[d]_{\wr}^{g} \\
M' \ar[r] & I'
} \]
commutes.

We now construct the Clifford algebra associated to a quadratic module, following Bichsel and Knus \cite[\S 3]{BichselKnus}.  (For a detailed treatment of the Clifford algebra when $N=R$ see also Knus \cite[Chapter IV]{Knus}.)  Let $(M,I,q)$ be a quadratic module over $R$.  Let $I\spcheck=\Hom(I,R)$ be the dual of the invertible $R$-module $I$, and abbreviate $I^0=R$, $I^d = \underbrace{I \otimes \dots \otimes I}_{d}$ for $d \in \Z_{>0}$, and $I^{-d} = (I\spcheck)^{\otimes d}$ for $d \in Z_{<0}$.  The $R$-module
\[ L[I] = \bigoplus_{d \in \Z} I^{\otimes d} \]
with multiplication given by the tensor product and the canonical isomorphism 
\begin{align*}
I \otimes I\spcheck &\xrightarrow{\sim} R \\
x \otimes f &\mapsto f(x)
\end{align*}
has the structure of a (commutative) $R$-algebra.  We call $L[I]$ the \emph{Rees algebra} of $I$.

Let
\[ \Ten(M)=\bigoplus_{d=0}^{\infty} M^{\otimes d} \] 
be the tensor algebra of $M$.  Let $J(q)$ denote the two-sided ideal of $\Ten(M) \otimes L[I]$ generated by elements 
\begin{equation} \label{xx1qx}
x \otimes x \otimes 1 - 1 \otimes q(x)
\end{equation}
for $x \in M$.

The algebra $\Ten(M)$ has a natural $\Z_{\geq 0}$-grading (with $\delta(x)=1$ for $x \in M$); doubling the natural grading on $L[I]$, so that $\delta(a)=2$ for $a \in I$, we find that $J(q)$ has a $\Z$-grading.  Thus, the algebra 
\[ C(M,I,q) = (\Ten(M) \otimes L[I])/J(q) \]
is also $\Z$-graded; we call $C(M,I,q)$ the \emph{Clifford algebra} of $(M,I,q)$.  

Let $C^0(M,I,q)$ denote the $R$-subalgebra of $C(M,I,q)$ consisting of elements in degree $0$; we call $C^0(M,I,q)$ the \emph{even Clifford algebra} of $(M,I,q)$.  The $R$-algebra $C^0(M,I,q)$ has rank $2^{n-1}$ where $n=\rk M$ \cite[Proposition 3.5]{BichselKnus}.  Indeed, if $I$ is free over $R$, generated by $a$, then we have a natural isomorphism 
\[ C^0(M,I,q) \cong C^0(M, R, f \circ q) \] 
where $f=a\spcheck \in I\spcheck=\Hom(I,R)$ is the dual element to $a$.  If further $M$ is free over $R$ with basis $e_1,\dots,e_n$, then $C^0(M,I,q)$ is a free $R$-algebra generated by $e_{i_1} \otimes e_{i_2} \otimes \dots \otimes e_{i_{2m}} \otimes f^m$ with $1 \leq i_1 < i_2 < \dots < i_{2m} \leq n$.  

We write $e_1e_2 \cdots e_d$ for the image of $e_1 \otimes e_2 \otimes \dots \otimes e_d \otimes 1$ in $C(M,I,q)$, for $e_i \in M$.  A standard computation gives
\begin{equation} \label{cliffordmultiply}
xy + yx = T(x,y) \in C(M,I,q)
\end{equation} 
for all $x,y \in M$, where $T$ is the bilinear form associated to $q$.

The \emph{reversal map} defined by 
\[ x=e_1e_2\cdots e_d \mapsto \overline{x}=e_d \cdots e_2e_1 \] 
is an involution on $C(M,I,q)$ which restricts to an involution on $C^0(M,I,q)$.

\begin{rmk} \label{notenough}
The reversal map $\overline{\phantom{x}}:C(M,I,q) \to C(M,I,q)$ has the property that $x\overline{x} \in R$ for all pure tensors $x=e_1e_2 \cdots e_d$, so in particular for all $x \in M$; in fact, however, in general it defines a standard involution on $C(M,I,q)$ if and only if $\rk(M) \leq 2$.  Indeed, for any $x,y,z \in M$, applying (\ref{cliffordmultiply}) we have
\begin{align*} 
(x+yz)(\overline{x+yz}) &=(x+yz)(x+zy) = q(x) + yzx + xzy + q(y)q(z) \\
&= q(x)+q(y)q(z) - T(x,y)z + T(x,z)y + T(y,z)x.
\end{align*}
Suppose that $\overline{\phantom{x}}: C(M,I,q) \to C(M,I,q)$ is a standard involution and $\rk(M) \geq 3$.  If $x,y,z$ are $R$-linearly independent, then we must have $T(x,y)=T(x,z)=T(y,z)=0$.  Moreover, the fact that $(x+1)(x+1)=q(x)+1+2x$ for all $x \in M$ implies that $2=0 \in R$.  In other words, the reversal map is a standard involution on the full Clifford algebra $C(M,I,q)$ if and only if $2=0$ in $R$ and $C(M,I,q)$ is commutative.  
\end{rmk}

Now let $(M,I,q)$ be a ternary quadratic module, so that $M$ has rank $3$.  Then by the above, the even Clifford algebra $C^0(M,I,q)$ is an $R$-algebra of rank $4$.  Explicitly, we have
\begin{equation} \label{whoocliffnelly}
C^0(M,I,q) \cong \frac{R \oplus \bigl( M \otimes M \otimes I\spcheck \bigr)}{J^0(q)}
\end{equation}
where $J^0(q)$ is the $R$-module generated by elements of the form
\[ x \otimes x \otimes f - 1 \otimes f(q(x)) \]
for $x \in M$ and $f \in I\spcheck$.  The reversal map resticts to a standard involution on $C^0(M,I,q)$ (proved using similar calculations as in Remark \ref{notenough}, or see Example \ref{Qfree}).  The association $(M,I,q) \to C^0(M,I,q)$ is functorial with respect to isometries of quadratic modules and so we refer to it as the \emph{Clifford functor}.

\begin{exm} \label{Qfree}
Given the free module $M=R^3=Re_1 \oplus Re_2 \oplus Re_3$ equipped with the quadratic form $q:M \to R$ by
\begin{equation} \label{abcuvw}
q(xe_1+ye_2+ze_3)=q(x,y,z)=ax^2+by^2+cz^2+uyz+vxz+wxy,
\end{equation}
with $a,b,c,u,v,w \in R$, we compute directly that the Clifford algebra of $M$ is given by 
\[ C^0(M,q)=R \oplus Re_2e_3 \oplus Re_3e_1 \oplus Re_1e_2 \] 
and the map
\begin{equation} \label{BtoC}
\begin{aligned}
B &\xrightarrow{\sim} C^0(M,I,q) \\
i,j,k &\mapsto e_2e_3,e_3e_1,e_1e_2
\end{aligned}
\end{equation}
gives an isomorphism to the algebra $B$ where the following multiplication laws hold.  
\begin{align*}
i^2 &= ui-bc & jk &= a\overline{i}=a(u-i) & kj &= \overline{\overline{j}\,\overline{k}} = -vw + ai + wj + vk \\
j^2 &= vj-ac & ki &= b\overline{j}=b(v-j) & ik &= \overline{\overline{k}\,\overline{i}} = -uw + wi + bj + uk \tag{$Q$} \\
k^2 &= wk-ab & ij &= c\overline{k}=c(w-k) & ji &= \overline{\overline{i}\,\overline{j}} = -uv + vi + uj + ck 
\end{align*}
This construction has been attributed to Eichler and appears in Brzezinski \cite{Brzezinski} in the case $R=\Z$.  In this association, the reversal map corresponds to the standard involution $\overline{\phantom{x}}$ on $B$.  

Such a free quaternion ring is commutative \cite[\S 3]{Voightlowrank} if and only if either 
\[ B \cong R[i,j,k]/(i,j,k)^2 \] 
or 
\[ B \cong R[i,j,k]/(i^2+bc,j^2+ac, k^2+ab, jk+ai, ki+bj, ij+ck) \] 
with $a,b,c \in R$ satisfying $2a=2b=2c=0$.
\end{exm}

\begin{defn}
A \emph{quaternion ring} over $R$ is an $R$-algebra $B$ such that $B \cong C^0(M,I,q)$ with $(M,I,q)$ a ternary quadratic module over $R$.
\end{defn}

Note that if $R \to S$ is any ring homomorphism and $B$ is a quaternion ring, then $B_S=B \otimes_R S$ is also a quaternion ring by functoriality of the Clifford algebra construction.  

\section{The canonical exterior form}

We have seen in the previous section that the even Clifford functor associates to a ternary quadratic module an $R$-algebra of rank $4$ with a standard involution.  In this section, we show how to do the converse; we will then show in Section 3 that this indeed furnishes an inverse to the Clifford functor on its image.  Throughout this section, let $B$ be an $R$-algebra of rank $4$ with a standard involution.

Following Bhargava \cite{BhargavaQuartic} (who considered the case of commutative rings of rank $4$) and a footnote of Gross and Lucianovic \cite[Footnote 2]{GrossLuc}, we define the following quadratic map.

First, we note that $R$ is a direct summand of $B$, so $B/R$ is a projective $R$-module \cite[Lemma 1.3]{Voightlowrank}.  It follows that $\tbigwedge^4 B \cong \tbigwedge^3 (B/R)$ canonically.  

\begin{lem} \label{xwedgey}
There exists a unique quadratic map 
\[ \phi_B:\tbigwedge^2 (B/R) \to \tbigwedge^4 B \]
with the property that
\[ \phi_B(x \wedge y) = 1 \wedge x \wedge y \wedge xy \]
for all $x,y \in B$.
\end{lem}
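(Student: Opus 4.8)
The plan is to reduce to the free, local case, build the form from the explicit pairing $\Phi(x,y) = 1 \wedge x \wedge y \wedge xy$, and then patch. Since a quadratic map $\tbigwedge^2(B/R) \to \tbigwedge^4 B$ is exactly a global section of the finitely generated projective module $\Sym^2(\tbigwedge^2(B/R))\spcheck \otimes \tbigwedge^4 B$, both the construction and its uniqueness may be checked after passing to a cover of $\Spec R$ on which $B$ is free (patching as in \S1). I would therefore first assume that $R$ is local and $B = R \oplus V$ is free, with $V = B/R$ free of rank $3$ on $e_1, e_2, e_3$; write $g_{ij} = e_i \wedge e_j \in \tbigwedge^2(B/R)$ and let $\tbigwedge^4 B \cong \tbigwedge^3 V$ denote the ambient rank-one module.

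Define $\Phi \colon B \times B \to \tbigwedge^4 B$ by $\Phi(x,y) = 1 \wedge x \wedge y \wedge xy$. Two elementary reductions come first. Because $1 \wedge r = 0$ for $r \in R$ and $1 \wedge x \wedge y \wedge y = 0$, replacing $x$ by $x + r$ or $y$ by $y + r$ leaves $\Phi$ unchanged, so $\Phi$ factors through $(B/R) \times (B/R)$; and $\Phi(x,x) = 0$.

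The crux is the transformation law
\[
\Phi(au + bv,\ cu + dv) = (ad - bc)^2\, \Phi(u,v) \qquad (a,b,c,d \in R,\ u,v \in B).
\]
I would prove it by first pulling $ad - bc$ out of the partial wedge, using $1 \wedge (au+bv) \wedge (cu+dv) = (ad-bc)(1 \wedge u \wedge v)$, and then expanding $(au+bv)(cu+dv) = ac\,u^2 + ad\,uv + bc\,vu + bd\,v^2$ in the last slot. Here the standard involution is indispensable: the identity $u^2 = \trd(u)u - \nrd(u)$ (and likewise for $v$) forces $1 \wedge u \wedge v \wedge u^2 = 1 \wedge u \wedge v \wedge v^2 = 0$, while (\ref{xyyx}) gives $vu \equiv -uv \pmod{Ru + Rv + R}$ and hence $1 \wedge u \wedge v \wedge vu = -\Phi(u,v)$. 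Collecting terms leaves $(ad - bc)\Phi(u,v)$ in the last slot, for a total of $(ad-bc)^2\Phi(u,v)$. The appearance of the \emph{square} (rather than the first power) is precisely the signature of a quadratic, not linear, dependence on $x \wedge y$, matching $(au+bv)\wedge(cu+dv) = (ad-bc)\,u \wedge v$ in $\tbigwedge^2(B/R)$.

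With this in hand I would conclude as follows. A quadratic map out of the free module $\tbigwedge^2(B/R)$ is determined by the values it takes on the three generators $g_{12}, g_{13}, g_{23}$ and on the three sums $g_{ij} + g_{kl}$ (its diagonal and polar data), and each of these six bivectors is decomposable: in rank $3$ any two of the index pairs share an index, e.g.\ $g_{12} + g_{13} = e_1 \wedge (e_2 + e_3)$. I therefore define $\phi_B$ by assigning to these six generators the corresponding values of $\Phi$. The transformation law shows that $x \wedge y \mapsto \Phi(x,y)$ is itself a quadratic form in the Pl\"ucker coordinates $p_{ij}$ (the $2\times 2$ minors of the coordinate vectors of $x,y$), so it agrees with $\phi_B$ on the six generators and hence everywhere; this yields existence together with the defining identity $\phi_B(x \wedge y) = \Phi(x,y)$. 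For uniqueness, a quadratic map vanishing on all $x \wedge y$ vanishes on these same six decomposable generators, hence is zero. Finally the local forms agree on overlaps by uniqueness and glue to a global $\phi_B$, and global uniqueness follows since the module of quadratic maps is determined by its localizations. The main obstacle is the transformation law itself: it is exactly where one must invoke the standard involution (through $u^2 = \trd(u)u - \nrd(u)$ and (\ref{xyyx})) to see that the naive pairwise expression $1 \wedge x \wedge y \wedge xy$ descends from pairs to the exterior square as a genuine quadratic map.
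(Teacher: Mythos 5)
Your reduction to the free local case, the descent of $\Phi$ to $(B/R)\times(B/R)$, the definition of $\phi_B$ by its six values on decomposable generators, the uniqueness argument, and the gluing all match the paper, and your transformation law $\Phi(au+bv,\,cu+dv)=(ad-bc)^2\Phi(u,v)$ is correct and correctly proved; it packages neatly the identities the paper extracts from the standard involution. The gap is the single sentence ``The transformation law shows that $x\wedge y\mapsto\Phi(x,y)$ is itself a quadratic form in the Pl\"ucker coordinates,'' which is precisely the content of the lemma and does not follow from the transformation law. That law only relates values of $\Phi$ at pairs spanning one and the same rank-$2$ submodule (a single frame moved by $GL_2$); it gives no relation between values in different frames, and such relations are exactly what is needed to express $\Phi$ as a polynomial in the minors $p_{ij}$. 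Concretely, the pair $(e_1+e_2,\,e_1+e_3)$ spans a submodule distinct from those spanned by your six generator pairs, so no instance of the transformation law connects $\Phi(e_1+e_2,e_1+e_3)$ with the six assigned values; the inference ``agrees on the six generators, hence everywhere'' is valid for two quadratic forms on $\tbigwedge^2(B/R)$, but whether $\Phi$ \emph{is} such a form is the point at issue. Indeed, as a bare implication your claim is false: a map satisfying the transformation law scales correctly on decomposables without having to be quadratic in the $p_{ij}$ (over $R=\R$, the map $(x,y)\mapsto\bigl(\sum_{i<j}p_{ij}(x,y)^4\bigr)^{1/2}$ obeys the law but is not a quadratic form in the minors).

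What is missing is the one property of $\Phi$ you never state or use: it is quadratic in each variable separately, a fact the paper records and exploits. With it there are two ways to close the argument. One is to invoke the first fundamental theorem of invariant theory for vector invariants of $SL_2$ (valid over $\Z$, hence over any $R$): a form of bidegree $(2,2)$ in $(x,y)$ satisfying your invariance is a quadratic form in the $p_{ij}$; this is a genuine theorem, not a formal consequence of the law, and would at least need to be cited. The other, which is the paper's route and stays elementary, is a bootstrap using separate quadraticity: for $y$ a fixed basis vector, $x\mapsto\Phi(x,y)$ and $x\mapsto\phi_B(x\wedge y)$ are quadratic maps in $x$ agreeing at six determining points (here your transformation law supplies the symmetry and shift-invariance identities the paper derives by hand), hence are equal; one then repeats the comparison in the other variable, or, equivalently, expands $\Phi(x,y)=\sum C_{ab,cd}\,x_ax_by_cy_d$ and checks that the vanishing just obtained, together with $\Phi(x,x)=0$ and $\Phi(x+y,y)=\Phi(x,y)$, kills every coefficient of the difference $\Phi-\phi_B\circ\wedge$. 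Without one of these steps the existence half of your proof does not go through.
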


\begin{proof}
We first define the map on sets $\varphi:B \times B \to \bigwedge^4 B$ by $(x,y) \mapsto 1 \wedge x \wedge y \wedge xy$, where $B \times B$ denotes the Cartesian product.  This map descends to a map from $B/R \times B/R$.  We have $\varphi(ax,y)=\varphi(x,ay)$ for all $x,y \in B$ and $a \in R$.  Furthermore, we have $\varphi(x,x)=0$ for all $x \in B$ and by (\ref{xyyx}) we have
\begin{equation} \label{fxyyx}
\varphi(y,x)=1\wedge y\wedge x \wedge yx = -1\wedge x\wedge y \wedge (-xy) = \varphi(x,y)=\varphi(x,-y)
\end{equation}
for all $x,y \in B$.  Finally, the map $\varphi$ when restricted to each variable $x,y$ separately yields a quadratic map $B/R \to \bigwedge^4 B$.

We now prove the existence and uniqueness of the map $\phi=\phi_B$ when $B$ is free.  Let $i,j,k \in B$ form a basis for $B/R$.  Then $i \wedge j,j \wedge k, k \wedge i$ is a basis for $\bigwedge^2 (B/R)$.  It follows from (\ref{qxyz}) that to define a quadratic map $q:M \to N$ on a free module $M$ is equivalent to choosing elements $q(x),q(x+y) \in N$ for $x,y$ in any basis for $M$.  We thereby define
\begin{equation} \label{needrk3}
\begin{aligned}
\phi: \tbigwedge^2 (B/R) &\to \tbigwedge^4 B  \\
\phi(i \wedge j) &= \varphi(i,j) \\
\phi(i \wedge j + j \wedge k) &= \varphi(i-k,j) = \varphi(j,k-i)
\end{aligned}
\end{equation}
together with the cyclic permutations of (\ref{needrk3}).  By construction, the map $\phi$ is quadratic.

Now we need to show that in fact $\phi(x \wedge y)=\varphi(x,y)$ for all $x,y \in B$.  By definition and (\ref{fxyyx}), we have that this is true if $x,y \in \{i,j,k\}$.  For any $y \in \{i,j,k\}$, consider the maps
\begin{align*} 
\varphi_y,\phi_y : B/R &\to \tbigwedge^4 B \\
x &\mapsto \varphi(x \wedge y), \phi(x \wedge y)
\end{align*}
restricted to the first variable.  Note that each of these maps are quadratic and they agree on the values $i,j,k,i-k,j-i,k-j$, so they are equal.  The same argument on the other variable, where now we may restrict $\varphi,\phi$ with any $x \in B$, gives the result.  

To conclude, for any $R$-algebra $B$ there exists a finite cover of standard open sets $\{\Spec R_f\}_f$ of $\Spec R$ with $f \in R$ such that each localization $B_f$ is free.  By the above constructions, we have a map on each $B_f$ and by uniqueness these maps agree on overlaps, so by gluing we obtain a unique map $\phi$.
\end{proof}

\begin{rmk}
We used in (\ref{needrk3}) that $B$ has rank $4$; indeed, if $\rk(B)> 4$, there will be many ways to define such a map $\phi$.
\end{rmk}

We call the map $\phi_B:\tbigwedge^2 (B/R) \to \tbigwedge^4 B$ in Lemma \ref{xwedgey} the \emph{canonical exterior form} of $B$.

\begin{exm} \label{canonextfree}
Let $B$ be a free quaternion ring with basis $i,j,k$ and multiplication laws as in ($Q$) in Example \ref{Qfree}.  We compute the canonical exterior form 
\[ \phi=\phi_B:\tbigwedge^2(B/R) \to \tbigwedge^4 B \]
directly.  We have isomorphisms $\tbigwedge^4 B \to R$ by $1 \wedge i \wedge j \wedge k \mapsto -1$ and
\[ \tbigwedge^2(B/R) \xrightarrow{\sim} R(j \wedge k) \oplus R(k \wedge i) \oplus R(i \wedge j) = R e_1 \oplus Re_2 \oplus Re_3. \]
With these identifications, the canonical exterior form $\phi : R^3 \to R$ has
\[ \phi(e_1)=\phi(j \wedge k)=1 \wedge j \wedge k \wedge jk = 1 \wedge j \wedge k \wedge (-ai) \mapsto a \]
and 
\begin{align*} 
\phi(e_1+e_2)-\phi(e_1)-\phi(e_2) &= \phi(k\wedge (i-j)) - \phi(j \wedge k) - \phi(k \wedge i) \\
&= -1 \wedge k \wedge j \wedge ki - 1 \wedge k \wedge i \wedge kj \\
&= -w(1 \wedge k \wedge i \wedge j) \mapsto w.
\end{align*}
In this way, we see directly that $\phi(x(j \wedge k) + y(k \wedge i) + z(i \wedge j))=q(xe_1+ye_2+ze_3)$ is identified with the form in (\ref{abcuvw}).
\end{exm}

\begin{exm}
Suppose that $R$ is a Dedekind domain with field of fractions $F$.  Then we can write 
\begin{equation} \label{goodpseudo}
B = R \oplus \fraka i \oplus \frakb j \oplus \frakc k 
\end{equation}
with $\fraka,\frakb,\frakc \subset F$ fractional $R$-ideals and $i,j,k \in B$.  By the same reasoning as in the free case, we may assume that $1,i,j,k$ satisfy the multiplication rules ($Q$), and then we say that the decomposition (\ref{goodpseudo}) is a \emph{good pseudobasis}, and the canonical exterior form of $B$ is, analogously as in Example \ref{canonextfree}, given by
\[ \phi_B : \frakb\frakc e_1 \oplus \fraka\frakc e_2 \oplus \fraka\frakb e_3 \to \fraka\frakb\frakc \]
under the identification $\tbigwedge^4 B \xrightarrow{\sim} \fraka\frakb\frakc$ induced by $1 \wedge i \wedge j \wedge k \mapsto -1$; here, $\phi_B(xe_1+ye_2+ze_3)$ is given as in (\ref{abcuvw}) but now with $x,y,z$ in their respective coefficient ideals.
\end{exm}

\begin{exm} \label{canonexceptfree}
Let $B$ be a free exceptional ring $B=R \oplus M$ where $M$ has basis $i,j,k$.  Then by definition we have the following multiplication laws in $B$:
\begin{align*}
i^2 &= ui & jk &= vk & kj &= wj \\
j^2 &= vj & ki &= wi & ik &= uk \tag{$E$} \\
k^2 &= wk & ij &= uj & ji &= vi.
\end{align*}
We then compute as in the previous example that the canonical form $\phi_B$ is identically zero.

Comparing ($Q$) to ($E$), we see that a ring is both a quaternion ring and an exceptional ring; if $2$ is a nonzerodivisor in $R$, then this holds if and only if $B$ is commutative.
\end{exm}

\begin{rmk}
From the above example, we see directly that there exists a bijection between the set of orbits of $GL(R^3)$ on $R^3$ and the set of isomorphism classes of free exceptional rings of rank $4$, where we associate to the triple $(u,v,w) \in R^3$ the algebra with multiplication laws as in ($E$).

Lucianovic \cite[Proposition 1.8.1]{Luc} instead associates to $(u,v,w) \in R^3$ the skew-symmetric matrix $M=\begin{pmatrix} 0 & w & -v \\ -w & 0 & u \\ v & -u & 0 \end{pmatrix}$, and $g \in GL_3(R)$ acts on $M$ by $M \mapsto (\det g) ({}^t g)^{-1} M g^{-1}$.  This more complicated (but essentially equivalent) association gives a bijection to the set of orbits of $GL(N)$ on $\bigwedge^2 N\spcheck \otimes \bigwedge^3 N$.  
\end{rmk}

Using the canonical exterior form, we can distinguish quaternion rings and exceptional rings in the class of algebras of rank $4$ with a standard involution in the final section.

\section{An equivalence of categories}

In this section, we prove Theorem B, generalizing the following result of Gross and Lucianovic.  

\begin{prop}[Gross-Lucianovic] \label{GrossLucN3}
Let $M$ be a free module of rank $3$.  Then there is a bijection between the set of orbits $GL(M)$ on $\Sym^2(M\spcheck) \otimes \bigwedge^3 M$ and the set of isomorphism classes of free quaternion rings over $R$.
\end{prop}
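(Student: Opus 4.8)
The plan is to construct the bijection explicitly from the two operations already at our disposal — the even Clifford functor $q \mapsto C^0(M,\tbigwedge^3 M,q)$ and the canonical exterior form $B \mapsto \phi_B$ of Lemma~\ref{xwedgey} — and to show that in the free setting they are mutually inverse and intertwine the group action with $R$-algebra isomorphism. I would begin by reinterpreting both sides. A section of $\Sym^2(M\spcheck) \otimes \tbigwedge^3 M$ is exactly a quadratic map $q\colon M \to \tbigwedge^3 M$, i.e.\ a ternary quadratic module $(M,\tbigwedge^3 M,q)$ with $M$ free of rank $3$ and $I = \tbigwedge^3 M$; fixing the standard basis of $M$ and the generator of $\tbigwedge^3 M$ records $q$ by its coefficients $(a,b,c,u,v,w)$ as in~(\ref{abcuvw}). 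Applying the even Clifford functor produces the free quaternion ring $C^0(M,\tbigwedge^3 M,q)$, whose multiplication table is precisely $(Q)$ by Example~\ref{Qfree}. In the other direction, to a free quaternion ring $B$ I attach $\phi_B\colon \tbigwedge^2(B/R) \to \tbigwedge^4 B$; since $B/R$ is free of rank $3$, this is a ternary quadratic module carried by the free rank-$3$ module $\tbigwedge^2(B/R)$ with target the free rank-$1$ module $\tbigwedge^4 B$.

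The core of the proof is that these assignments invert one another, and here the two explicit computations do the work. Example~\ref{canonextfree} shows that, for a free quaternion ring with table $(Q)$, the form $\phi_B$ — read off in the basis $j\wedge k, k\wedge i, i\wedge j$ and under the identification $\tbigwedge^4 B \xrightarrow{\sim} R$, $1\wedge i\wedge j\wedge k \mapsto -1$ — is again~(\ref{abcuvw}) with the same coefficients. Thus forming $C^0(q)$ and then the canonical exterior form returns $q$, while conversely Example~\ref{Qfree} identifies $B$ with the even Clifford algebra of its own canonical exterior form. One delicate point to record is the identification of targets: $\tbigwedge^3(\tbigwedge^2(B/R)) \cong (\tbigwedge^4 B)^{\otimes 2}$ differs from the actual target $\tbigwedge^4 B$ of $\phi_B$ by a square, so viewing $(\tbigwedge^2(B/R),\tbigwedge^4 B,\phi_B)$ as an element of $\Sym^2(M\spcheck) \otimes \tbigwedge^3 M$ requires choosing a trivialization of $\tbigwedge^4 B$. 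In the free case any two choices differ by a unit, hence by an isometry, so the resulting class is well defined; it is exactly this harmless-here ambiguity that is later rigidified by a parity factorization in Theorem~B.

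Finally I would descend to orbits. Both constructions are functorial: the Clifford functor with respect to isometries, and $\phi_B$ with respect to $R$-algebra isomorphisms (immediate from $\phi_B(x\wedge y) = 1\wedge x\wedge y\wedge xy$). Hence they induce mutually inverse maps between isometry classes of ternary quadratic modules $(M,\tbigwedge^3 M,q)$ and isomorphism classes of free quaternion rings. It then remains to match isometry classes with $GL(M)$-orbits: an isometry $(f,h)$ of such modules differs from the twisted action of $f \in GL(M)$ (which scales $\tbigwedge^3 M$ by $\det f$) only by the unit $h/\det f$, while a scalar matrix $\mu\,\id_M \in GL(M)$ acts on $\Sym^2(M\spcheck) \otimes \tbigwedge^3 M$ by the single scalar $\mu$. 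Since scalar matrices thus realize arbitrary unit rescalings of the target, the relation of isometry coincides with the $GL(M)$-orbit relation, and composing the two identifications yields the asserted bijection.

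I expect the main obstacle to be bookkeeping the determinant twist correctly: one must verify precisely that the natural $GL_3(R)$-action pulling back the quadratic form and scaling $\tbigwedge^3 M$ by $\det g$ corresponds to $R$-algebra isomorphism of the rings presented by $(Q)$, and that the squared discrepancy between $\tbigwedge^4 B$ and $\tbigwedge^3(\tbigwedge^2(B/R))$ is absorbed by the choice of trivialization. Once the group actions are matched, everything else reduces to the coordinate identities of Examples~\ref{Qfree} and~\ref{canonextfree}.
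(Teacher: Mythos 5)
Your argument is correct in the proposition's intended setting, and in substance it follows the paper's own route rather than a new one. The paper never proves Proposition \ref{GrossLucN3} directly: it cites Gross--Lucianovic for it, proves Theorem \ref{mainthmcorr} independently, and then recovers the proposition in Remark \ref{reducetogrossluc} by specializing to forms valued in $\tbigwedge^3 M$. Your two mutually inverse constructions (even Clifford functor and canonical exterior form), your appeal to Examples \ref{Qfree} and \ref{canonextfree} for the mutual-inversion computation, and your observation that the determinant-twisted action is just isometry of forms $q\colon M \to \tbigwedge^3 M$ --- with scalar matrices acting by $\mu^{-2}\cdot\mu^{3}=\mu$, hence absorbing unit rescalings --- reproduce exactly the content of the proof of Theorem \ref{mainthmcorr} restricted to free modules (where the parity factorization becomes your ``choice of trivialization of $\tbigwedge^4 B$'') combined with Remark \ref{reducetogrossluc}. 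The genuinely different proof is the one the paper attributes to Gross and Lucianovic, which works with good bases and structure constants as in Proposition \ref{grossluc} and identifies the twisted $GL_3$-action by explicit change of basis; your version trades that computation for the exterior-form machinery.

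One point you should state explicitly, since as written it is a gap: you assert that $B/R$ is free of rank $3$ whenever $B$ is free of rank $4$. That is the claim that a stably free module of rank $3$ is free, which fails over a general commutative ring. It is harmless precisely because this proposition (as stated in the introduction and in Remark \ref{reducetogrossluc}) concerns a PID or local ring, over which all projectives are free; the same hypothesis is what guarantees that every free quaternion ring is $C^0$ of a free ternary form --- equivalently, admits a good basis $1,i,j,k$ --- which your converse direction also uses silently. Over an arbitrary base these failures are exactly the obstruction that the parity factorization of Theorem \ref{mainthmcorr} is designed to manage, so the restriction on $R$ (or a definition of ``free quaternion ring'' as one possessing a basis containing $1$) needs to appear in your proof.
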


In this bijection, a quadratic form $q(x,y,z)$ as in (\ref{abcuvw}) is associated to the even Clifford algebra $C^0(R^3,q)$ as in Example \ref{Qfree}.  

The bijection of Proposition \ref{GrossLucN3} is \emph{discriminant-preserving}, as follows.  We define the \emph{(half-)discriminant} of a quadratic form $q(x,y,z)$ as in (\ref{abcuvw}) by
\begin{equation} \label{Dq}
D(q)=4abc+uvw-au^2-bv^2-cw^2.
\end{equation}
 On the other hand, we define the (reduced) \emph{discriminant} $D(B)$ of an algebra $B$ of rank $4$ with standard involution to be the ideal of $R$ generated by all values 
\begin{equation} \label{Dxyz}
\{x,y,z\} = \trd([x,y]\overline{z})=\trd((xy-yx)\overline{z})
\end{equation}
with $x,y,z \in B$.  If $1,i,j,k$ is a good basis for $B$, a direct calculation verifies that already
\[ \{i,j,k\}=-D(q) \]
so the map preserves discriminants.  In particular, every exceptional ring $B$ with good basis $i,j,k$ has $\{i,j,k\}=0$ so that $D(B)=0$, so if one restricts to $R$-algebras $B$ with $D(B) \neq 0$ one will never see an exceptional ring, and it is perhaps for this reason that they fail to appear in more classical treatments.

We warn the reader that although the even Clifford algebra construction which gives rise to the bijection in Proposition \ref{GrossLucN3} is functorial with respect to isometries and isomorphisms, respectively, it is not always functorial with respect to other morphisms.

\begin{exm}
Consider the sum of squares form $q(x,y,z)=x^2+y^2+z^2$ over $R=\Z$.  The associated quaternion ring $B$ is generated over $\Z$ by the elements $i,j,k$ subject to $i^2=j^2=k^2=-1$ and $ijk=-1$ and has discriminant $4$.  The ring $B$ is an order inside the quaternion algebra of discriminant $2$ over $\Q$ which gives rise to the Hamiltonian ring over $\R$, and $B$ is contained in the maximal order $B_{\text{max}}$ (of discriminant $2$) obtained by adjoining the element $(1+i+j+k)/2$ to $B$.  Indeed, the ring $B_{\text{max}}$ is obtained from the Clifford algebra associated to the form $q_{\text{max}}(x,y,z)=x^2+y^2+z^2+yz+xz+yz$ of discriminant $2$.  However, the lattice associated to the form $q$ is maximal in $\Q^3$, so there is no inclusion of quadratic modules which gives rise to the inclusion $B \hookrightarrow B_{\text{max}}$ of these two quaternion orders.  Of course, if we consider the correspondence over $\Q$ (or even $\Z[1/2]$ then the inclusion becomes an isomorphism, which shows that the lattice associated to each of the quadratic forms $q$ and $q_{\text{max}}$ are contained in the same quadratic space over $\Q$, and the change of basis from one to the other is defined over $GL_3(\Q)$ (but not $GL_3(\Z)$).
\end{exm}

\begin{rmk}
There is an alternative association between forms and algebras which we call the \emph{trace zero method} and describe for the sake of comparison (see also Lucianovic \cite[Remark, pp.~28--29]{Luc}).  Let $B$ be a free $R$-algebra of rank $4$ with a standard involution and let $B^0=\{x \in B : \trd(x)=0\}$ be the elements of reduced trace zero in $B$.  Then $(B^0,\nrd|_{B^0})$ is a ternary quadratic module.  

Starting with a quadratic form $(R^3,q)$, considering the free quaternion algebra $B=C^0(R^3,q)$ with good basis as in (\ref{BtoC}), the trace zero module $(B^0,\nrd)$ has basis $jk-kj,ki-ik,ij-ji$ and we compute that
\[ \nrd(x(jk-kj)+y(ki-ik)+z(ij-ji))=D(q)q(x,y,z). \]
In particular, if $D(q)=D(B) \in R^*$, in which case $q$ is said to be \emph{semiregular}, we can instead associate to $B$ the quadratic module $(B^0, -D(B)^{-1} \nrd)$ to give an honest bijection.  One can use this together with localization to prove a result for an arbitrary quadratic module $(M,q)$, as exihibited by Knus \cite[\S V.3]{KnusQHFR}.  This point of view works very well, for example, in the classical case of quaternion algebras over a field.  When the discriminant of $(M,q)$ is principal and $R$ is a domain, one can similarly adjust the maps to obtain a bijection \cite{Brzezinski}.  However, in general it is not clear how to generalize this method to quadratic forms which are not semiregular.
\end{rmk}

It is perhaps tempting to think that we will simply find a functorial bijection between isometry classes of ternary quadratic modules over $R$ and isomorphism classes of quaternion rings over $R$; however, we notice one obstruction which does not appear in the free case.  

Let $(M,I,q)$ be a ternary quadratic module.  Recall the definition of the even Clifford algebra $C^0(M,I,q)$ from Section 1.  By definition, as an $R$-module, we have 
\begin{equation} \label{wedgezero}
C^0(M,I,q)/R \cong \tbigwedge^2 M \otimes I\spcheck.
\end{equation}
To analyze this isomorphism, we note the following lemma.

\begin{lem} \label{wedgewedge}
Let $M$ be a projective $R$-module of rank $3$.  Then there are canonical isomorphisms
\begin{equation} \label{wedgewedge1}
\tbigwedge^3 \bigl(\tbigwedge^2 M\bigr) \xrightarrow{\sim} \bigl(\tbigwedge^3 M\bigr)^{\otimes 2}
\end{equation}
and
\begin{equation} \label{wedgewedge2}
\tbigwedge^2(\tbigwedge^2 M) \xrightarrow{\sim} M \otimes \tbigwedge^3 M.
\end{equation}
\end{lem}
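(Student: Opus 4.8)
The plan is to deduce both isomorphisms from a single piece of rank-$3$ multilinear algebra. For $M$ projective of rank $3$, the wedge pairing $M \otimes \tbigwedge^2 M \to \tbigwedge^3 M$, $v \otimes \omega \mapsto v \wedge \omega$, is perfect, so it furnishes a canonical isomorphism
\[ \tbigwedge^2 M \xrightarrow{\sim} \Hom_R\bigl(M, \tbigwedge^3 M\bigr) = M\spcheck \otimes \tbigwedge^3 M. \]
Writing $L = \tbigwedge^3 M$ (an invertible module), this identifies $\tbigwedge^2 M$ with $M\spcheck \otimes L$, and both desired isomorphisms follow by taking exterior powers and invoking two further standard natural identities for finitely generated projective modules: $\tbigwedge^d(N \otimes L) \cong \tbigwedge^d N \otimes L^{\otimes d}$ for $L$ invertible, and $\tbigwedge^d(N\spcheck) \cong (\tbigwedge^d N)\spcheck$.

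First I would establish (\ref{wedgewedge1}). Substituting $\tbigwedge^2 M \cong M\spcheck \otimes L$ and applying the two identities,
\begin{align*}
\tbigwedge^3\bigl(\tbigwedge^2 M\bigr) &\cong \tbigwedge^3(M\spcheck \otimes L) \cong \tbigwedge^3(M\spcheck) \otimes L^{\otimes 3} \\
&\cong (\tbigwedge^3 M)\spcheck \otimes L^{\otimes 3} = L\spcheck \otimes L^{\otimes 3} \cong L^{\otimes 2},
\end{align*}
which is exactly $(\tbigwedge^3 M)^{\otimes 2}$. For (\ref{wedgewedge2}) the same substitution gives $\tbigwedge^2(\tbigwedge^2 M) \cong \tbigwedge^2(M\spcheck) \otimes L^{\otimes 2}$; applying the rank-$3$ duality isomorphism with $N = M\spcheck$ (together with the canonical reflexivity $(M\spcheck)\spcheck \cong M$) yields $\tbigwedge^2(M\spcheck) \cong (M\spcheck)\spcheck \otimes \tbigwedge^3(M\spcheck) \cong M \otimes L\spcheck$, whence $\tbigwedge^2(\tbigwedge^2 M) \cong M \otimes L\spcheck \otimes L^{\otimes 2} = M \otimes L = M \otimes \tbigwedge^3 M$, as claimed. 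Each arrow is a named natural isomorphism, so the composites are canonical and independent of any choice of basis.

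The one point requiring genuine verification is that the wedge pairing is perfect, equivalently that the natural map $\tbigwedge^2 M \to M\spcheck \otimes \tbigwedge^3 M$ is an isomorphism; the auxiliary identities $\tbigwedge^d(N \otimes L) \cong \tbigwedge^d N \otimes L^{\otimes d}$ and $\tbigwedge^d(N\spcheck) \cong (\tbigwedge^d N)\spcheck$ are standard. I expect this to be the main, and essentially only, obstacle, and I would dispatch it by the local--global principle: every map in sight is natural, hence compatible with localization, and a finitely generated projective module of constant rank is free after localizing at each prime of $R$. Over a local ring $M$ is free on a basis $e_1, e_2, e_3$, and the pairing carries the basis $e_2 \wedge e_3, \, e_3 \wedge e_1, \, e_1 \wedge e_2$ of $\tbigwedge^2 M$ to the dual basis of $M\spcheck$ tensored with the generator $e_1 \wedge e_2 \wedge e_3$ of $\tbigwedge^3 M$; thus it is an isomorphism locally and therefore globally, and both canonical maps above are isomorphisms as well.
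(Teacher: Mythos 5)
Your proof is correct, but it takes a genuinely different route from the paper's. The paper proves both isomorphisms by writing down explicit element-level maps: for (\ref{wedgewedge1}) it defines $s:M^{\otimes 6} \to (\tbigwedge^3 M)^{\otimes 2}$ by $x \otimes x' \otimes y \otimes y' \otimes z \otimes z' \mapsto (x \wedge x' \wedge y') \otimes (y \wedge z \wedge z') - (x \wedge x' \wedge y) \otimes (y' \wedge z \wedge z')$, checks directly that this descends through the alternating relations to $\tbigwedge^3(\tbigwedge^2 M)$, and verifies locally that a generator maps to a generator; (\ref{wedgewedge2}) is handled by an analogous map on $M^{\otimes 4}$. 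You instead factor everything through the single duality isomorphism $\tbigwedge^2 M \cong M\spcheck \otimes \tbigwedge^3 M$ coming from the perfect wedge pairing, together with the standard compatibilities $\tbigwedge^d(N \otimes L) \cong \tbigwedge^d N \otimes L^{\otimes d}$ for $L$ invertible and $\tbigwedge^d(N\spcheck) \cong (\tbigwedge^d N)\spcheck$, so that the only thing needing genuine verification is the local perfectness of the pairing. Your argument is more modular and generalizes immediately (it gives $\tbigwedge^k M \cong \tbigwedge^{n-k} M\spcheck \otimes \tbigwedge^n M$ in any rank $n$), at the price that the canonical isomorphism is now a composite of several identifications; since the paper later chases elements through (\ref{wedgewedge1}) and (\ref{wedgewedge2}) --- in (\ref{oopsnotPic}) and in the local computations in the proof of Theorem \ref{mainthmcorr}, where the quadratic form must be tracked through these identifications --- its explicit formulas are more directly usable, whereas with your approach one must unwind the composite and fix the sign conventions in the ``standard'' identities consistently to do the same. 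As a sanity check, the two constructions agree: on a local basis $e_1,e_2,e_3$ both send $(e_1 \wedge e_2) \wedge (e_2 \wedge e_3) \wedge (e_3 \wedge e_1)$ to $(e_1 \wedge e_2 \wedge e_3)^{\otimes 2}$.
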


\begin{proof}
We exhibit first the isomorphism (\ref{wedgewedge1}).  We define the map
\begin{align*} 
s:M^{\otimes 6} &\to \bigl(\tbigwedge^3 M\bigr)^{\otimes 2} \\
x \otimes x' \otimes y \otimes y' \otimes z \otimes z' &\mapsto (x \wedge x' \wedge y') \otimes (y \wedge z \wedge z') \\
&\hspace{12ex} - (x \wedge x' \wedge y) \otimes (y' \wedge z \wedge z')
\end{align*}
with $x,x',y,y',z,z'\in M$.

It is easy to see that $s$ descends to $(\tbigwedge^2 M)^{\otimes 3}$; we show that $s$ in fact descends to $\tbigwedge^3 (\tbigwedge^2 M)$.  We observe that 
\[ s(x \wedge x' \otimes y \wedge y' \otimes z \wedge z')=0 \]
 whenever $x=y$ and $x'=y'$ (with similar statements for $x,z$ and $y,z$).  To finish, we show that
\begin{equation} \label{switchxyz}
s((x \wedge x') \otimes (y \wedge y') \otimes (z \wedge z')) = -s((y \wedge y') \otimes (x \wedge x') \otimes (z \wedge z')).
\end{equation}
To prove (\ref{switchxyz}) we may do so locally and hence assume that $M$ is free with basis $e_1,e_2,e_3$; by linearity, it is enough to note that
\begin{align*} 
s((e_1 \wedge e_2) \otimes (e_2 \wedge e_3) \otimes (e_3 \wedge e_1)) &= 
(e_1 \wedge e_2 \wedge e_3) \otimes (e_2 \wedge e_3 \wedge e_1) \\
&= (e_2 \wedge e_3 \wedge e_1) \otimes (e_2 \wedge e_3 \wedge e_1) \\
&= -s((e_2 \wedge e_3) \otimes (e_1 \wedge e_2) \otimes (e_3 \wedge e_1)).
\end{align*}
It follows then also that $s$ is an isomorphism, since it maps the generator
\[ (e_1 \wedge e_2) \wedge (e_2 \wedge e_3) \wedge (e_3 \wedge e_1) \in \tbigwedge^3(\tbigwedge^2 M) \] 
to the generator $(e_1 \wedge e_2 \wedge e_3) \otimes (e_2 \wedge e_3 \wedge e_1) \in (\tbigwedge^3 M)^{\otimes 2}$.

The second isomorphism (\ref{wedgewedge2}) arises from the map
\begin{equation} \label{ww2map}
\begin{aligned}
M^{\otimes 4} &\to M \otimes \tbigwedge^3 M \\
x \otimes x' \otimes y \otimes y' &\mapsto x' \otimes (x \wedge y \wedge y') - x \otimes (x' \wedge y \wedge y')
\end{aligned}
\end{equation}
and can be proved in a similar way.
\end{proof}

By (\ref{wedgewedge1}) and (\ref{wedgezero}), we find that
\begin{equation} \label{oopsnotPic}
\tbigwedge^4 C^0(M,I,q) \cong \tbigwedge^3 (C^0(M,I,q)/R) \cong \tbigwedge^3 \bigl(\tbigwedge^2 M \otimes I\spcheck \bigr) \cong \bigl(\tbigwedge^3 M\bigr)^{\otimes 2} \otimes (I\spcheck)^{\otimes 3}.
\end{equation}
(Compare this with Kable et al.\  \cite{Kable}, who considers the Steinitz class of a central simple algebra over a number field, and Peters \cite{Peters} who works over a Dedekind domain.)

Cognizant of (\ref{oopsnotPic}), we make the following definition.  Let $N$ be an invertible $R$-module.  A \emph{parity factorization} of $N$ is an $R$-module isomorphism 
\[ p:P^{\otimes 2} \otimes Q \xrightarrow{\sim} N \]
where $P,Q$ are invertible $R$-modules.  Note that $N$ always has the \emph{trivial} parity factorization $R^{\otimes 2} \otimes N \xrightarrow{\sim} N$.  An isomorphism between two parity factorizations $p:P^{\otimes 2} \otimes Q \xrightarrow{\sim} N$ and $p':P'^{\otimes 2} \otimes Q' \xrightarrow{\sim} N'$ is given by isomorphism $P \xrightarrow{\sim} P'$, $Q \xrightarrow{\sim} Q'$, $N \xrightarrow{\sim} N'$ which commute with $p,p'$.

We are now ready for the main result in this section.

\begin{thm} \label{mainthmcorr}
There is a bijection
\begin{equation*}
\left\{ \begin{minipage}{28ex} 
\begin{center}
Isometry classes of ternary \\
quadratic modules $(M,I,q)$ \\
over $R$
\end{center} 
\end{minipage}
\right\} \longleftrightarrow \left\{ \begin{minipage}{39ex} 
\begin{center}
Isomorphism classes of quaternion \\
rings $B$ over $R$ equipped with a parity \\
factorization $p:P^{\otimes 2} \otimes Q \xrightarrow{\sim} \tbigwedge^4 B$
\end{center} 
\end{minipage}
\right\}
\end{equation*}
which is functorial with respect to the base ring $R$.  In this bijection, the isometry class of a quadratic module $(M,I,q)$ maps to the isomorphism class of the quaternion ring $C^0(M,I,q)$ equipped with the parity factorization
\begin{equation} \label{yeahparityfact}
(\tbigwedge^3 M \otimes (I\spcheck)^{\otimes 2})^{\otimes 2} \otimes I \xrightarrow{\sim} \tbigwedge^4 C^0(M,I,q).
\end{equation}
\end{thm}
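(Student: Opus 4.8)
The plan is to construct an explicit inverse to the forward map $(M,I,q) \mapsto (C^0(M,I,q),p)$ and to check that both composites are the identity, letting the canonical isomorphisms of Lemma \ref{wedgewedge} absorb all the line-bundle bookkeeping. First I would verify that the forward map is well-defined: the even Clifford algebra $C^0(M,I,q)$ is a quaternion ring by definition, the chain of isomorphisms (\ref{oopsnotPic}) supplies the parity factorization (\ref{yeahparityfact}) with $P=\tbigwedge^3 M \otimes (I\spcheck)^{\otimes 2}$ and $Q=I$, and the naturality of the isomorphisms in Lemma \ref{wedgewedge}, together with functoriality of the Clifford functor under isometries, shows that an isometry of quadratic modules induces an isomorphism of quaternion rings carrying one parity factorization to the other. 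Hence the forward map descends to isometry classes.

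For the inverse map, given a quaternion ring $B$ with parity factorization $p:P^{\otimes 2}\otimes Q \xrightarrow{\sim} \tbigwedge^4 B$, I set $M = \tbigwedge^2(B/R)\otimes P\spcheck$ and $I=Q$, and I extract $q$ from the canonical exterior form $\phi_B:\tbigwedge^2(B/R) \to \tbigwedge^4 B$ of Lemma \ref{xwedgey}. Viewing $\phi_B$ as a section of $\Sym^2 W\spcheck \otimes \tbigwedge^4 B$ with $W=\tbigwedge^2(B/R)$, and substituting $W=M\otimes P$ and $\tbigwedge^4 B \cong P^{\otimes 2}\otimes Q$ (via $p$), the twist $P$ cancels:
\[ \Sym^2(M\otimes P)\spcheck \otimes P^{\otimes 2}\otimes Q \cong \Sym^2 M\spcheck \otimes (P\spcheck)^{\otimes 2}\otimes P^{\otimes 2}\otimes Q \cong \Sym^2 M\spcheck \otimes I, \]
so $\phi_B$ corresponds canonically to a quadratic map $q:M\to I$. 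This defines a ternary quadratic module $(M,I,q)$, manifestly well-defined on isomorphism classes of pairs $(B,p)$.

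Next I would check the two round trips. Starting from a module $(M_0,I_0,q_0)$, combining (\ref{wedgezero}) with (\ref{wedgewedge2}) gives $\tbigwedge^2\bigl(C^0(M_0,I_0,q_0)/R\bigr) \cong M_0 \otimes \tbigwedge^3 M_0 \otimes (I_0\spcheck)^{\otimes 2} = M_0 \otimes P$, so the reconstructed $M = \tbigwedge^2(B/R)\otimes P\spcheck$ is canonically $M_0$ and $I=Q=I_0$, while the reconstructed form equals $q_0$ by the local computation of Example \ref{canonextfree}. Conversely, given $(B,p)$ with reconstructed $(M,I,q)$, the same identities (now using both (\ref{wedgewedge1}) and (\ref{wedgewedge2})) yield a canonical \emph{$R$-module} isomorphism $\Theta:C^0(M,I,q) \xrightarrow{\sim} B$, and a short calculation shows $\tbigwedge^3 M \otimes (I\spcheck)^{\otimes 2} \cong P$, so that $\tbigwedge^4\Theta$ carries the parity factorization (\ref{yeahparityfact}) to $p$. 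To promote $\Theta$ to an algebra isomorphism I would argue locally: on a cover where $M,I,P,Q$ are free, $B$ is a free quaternion ring, hence $B \cong C^0(R^3,q_B)$ by Proposition \ref{GrossLucN3}, and the explicit laws ($Q$) of Example \ref{Qfree} show that a free quaternion ring is determined by its canonical exterior form; since the first round trip forces $\Theta$ to carry $\phi_{C^0(M,I,q)}$ to $\phi_B$, the map $\Theta$ is multiplicative on each member of the cover. Because being an algebra homomorphism is a local condition and $\Theta$ is a single globally defined module map, $\Theta$ is an algebra isomorphism. Functoriality in $R$ is then routine, as $C^0$, the exterior powers, the form $\phi_B$, and parity factorizations all commute with base change $-\otimes_R S$.

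The main obstacle is the final step of the second round trip: confirming that the canonically defined module isomorphism $\Theta$ is genuinely multiplicative. The parity factorization is precisely the datum that makes this possible, since from $B$ alone one recovers only $\tbigwedge^2(B/R)=M\otimes P$ and $\tbigwedge^4 B=P^{\otimes 2}\otimes Q$ and cannot separate $M$ from the twist $P$ or recover $I$; without this rigidification the local isomorphisms of Proposition \ref{GrossLucN3} need not glue. The delicate bookkeeping is to check that the canonical identifications of Lemma \ref{wedgewedge} are compatible, up to the unavoidable signs, with the explicit bases of Examples \ref{Qfree} and \ref{canonextfree}, so that the global $\Theta$ restricts on the free cover to the Gross--Lucianovic isomorphism itself rather than to some automorphism twist of it.
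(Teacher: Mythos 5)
Your overall strategy is the same as the paper's: identical forward map, identical inverse (your ``cancel the twist in $\Sym^2$'' construction of $q$ is precisely the paper's map $\phi_B \otimes p^*$ of (\ref{phiBpstar})), and the same two round trips, with the first verified locally via Example \ref{canonextfree} just as in the paper. The gap is in the last step of the second round trip, and it is genuine. First, the canonical identifications coming from (\ref{wedgezero}), Lemma \ref{wedgewedge}, and $p$ only give an isomorphism $C^0(M,I,q)/R \xrightarrow{\sim} B/R$ (this is (\ref{CRBR})), not a canonical module isomorphism $\Theta$ between the algebras themselves: the extension $0 \to R \to B \to B/R \to 0$ splits since $B/R$ is projective, but not canonically, and any two lifts of the map mod $R$ differ by addition of scalars, i.e., by a map $B/R \to R$. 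Second, your criterion ``$\Theta$ carries $\phi_{C^0(M,I,q)}$ to $\phi_B$, hence $\Theta$ is multiplicative'' fails, because $\phi_B$ depends only on the maps induced on $B/R$ and $\tbigwedge^4 B$ and therefore cannot distinguish between the different lifts. Concretely, let $B$ be a free quaternion ring with good basis $1,i,j,k$ and laws ($Q$), and let $\Theta:B \to B$ be the module map fixing $1,j,k$ and sending $i \mapsto i+1$. Then $\Theta$ induces the identity on $B/R$ and on $\tbigwedge^4 B$, so it preserves $\phi_B$; but $\Theta(ki)=b(v-j)$ while $\Theta(k)\Theta(i)=k(i+1)=b(v-j)+k$, so $\Theta$ is not an algebra homomorphism. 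Thus ``a free quaternion ring is determined by its canonical exterior form'' is true only after one normalizes the lift by the good-basis condition.

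The paper closes exactly this hole by a uniqueness statement: it proves that the canonical isomorphism $C^0(B)/R \xrightarrow{\sim} B/R$ admits a \emph{unique} lift to an $R$-algebra isomorphism $C^0(B) \xrightarrow{\sim} B$, checked locally where (with a good basis, and $P,Q \cong R$) the unique algebra lift is the Gross--Lucianovic map (\ref{BtoC}); this uniqueness is what allows the locally constructed algebra isomorphisms to glue into a global one. Your closing paragraph senses the right difficulty but misnames it: the ambiguity to be controlled is addition of scalars (which destroys multiplicativity while preserving $\phi_B$), not ``an automorphism twist'' (composing with an automorphism would still yield an algebra isomorphism). Replacing ``the canonical $\Theta$ is multiplicative because it preserves the form'' by ``the canonical map mod $R$ has a unique multiplicative lift, verified on a cover by good bases'' repairs the argument, and the rest of your proof then matches the paper's.
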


In other words, there is a functor (the even Clifford algebra functor) which associates to each ternary quadratic module over $R$ a quaternion ring over $R$ equipped with a parity factorization, and given a ring homomorphism $R \to S$ there is a natural transformation between the two functors restricted to objects over $R$ and $S$, respectively.  This functor gives a bijection over $R$ when one looks at the corresponding set of isomorphism classes on each side.

\begin{proof}
Given a ternary quadratic module $(M,I,q)$, we associate to it the even Clifford algebra $B=C^0(M,I,q)$ with the parity factorization (\ref{yeahparityfact}) as in (\ref{oopsnotPic}).  The algebra $B$ is a quaternion ring by definition.  

In the other direction, we use the canonical exterior form $\phi_B : \tbigwedge^2 (B/R) \to \tbigwedge^4 B$ as defined in (\ref{xwedgey}).  Let $B$ be a quaternion ring with parity factorization $p:P^{\otimes 2} \otimes Q \xrightarrow{\sim} \tbigwedge^4 B$.  Then by dualizing, the map $p$ gives an isomorphism 
\[ p^{*}:(P\spcheck)^{\otimes 2} \xrightarrow{\sim} (\tbigwedge^4 B)\spcheck \otimes Q. \]
Note that $p^{*}$ defines a quadratic map $P\spcheck \to (\tbigwedge^4 B)\spcheck \otimes Q$ by $x \mapsto p^*(x \otimes x)$.  We associate then to the pair $(B,p)$ the ternary quadratic module associated to the quadratic map
\begin{equation} \label{phiBpstar}
\phi_B \otimes p^{*} : \tbigwedge^2(B/R) \otimes P\spcheck \to \bigwedge^4 B \otimes \bigl((\tbigwedge^4 B)\spcheck \otimes Q\bigr) \xrightarrow{\sim} Q.
\end{equation}

We need to show that these associations are indeed inverse to each other.  First, given the algebra $C^0(M,I,q)$ with parity factorization $p$ as in (\ref{yeahparityfact}), we have by the above association the ternary quadratic module
\begin{equation} \label{phipstar}
\phi \otimes p^{*} : \tbigwedge^2(C^0(M,I,q)/R) \otimes (\tbigwedge^3 M)\spcheck \otimes I^{\otimes 2} \to I.
\end{equation}
From (\ref{wedgezero}) and (\ref{wedgewedge2}) we obtain
\[ \tbigwedge^2(C^0(M,I,q)/R) \cong \tbigwedge^2\bigl(\tbigwedge^2 M \otimes I\spcheck\bigr) \cong \tbigwedge^2 (\tbigwedge^2 M) \otimes (I\spcheck)^{\otimes 2} \cong M \otimes \tbigwedge^3 M \otimes (I\spcheck)^{\otimes 2}
\]
hence the ternary quadratic module $\phi \otimes p^*$ (\ref{phipstar}) has domain canonically isomorphic to
\[ \bigl(M \otimes \tbigwedge^3 M \otimes (I\spcheck)^{\otimes 2}\bigr) \otimes (\tbigwedge^3 M)\spcheck \otimes I^{\otimes 2} \cong M \]
and so yields a quadratic map $\phi \otimes p^*:M \to I$.  

To show that $q$ is isometric to $\phi \otimes p^*$ via the above canonical isomorphisms, we may do so locally and therefore assume that $M,I$ are free so that $q:R^3 \to R$ is given as in (\ref{abcuvw}).  Then the Clifford algebra $B=C^0(R^3,q)$ is a quaternion ring defined by the multiplication rules ($Q$).  By Example \ref{canonextfree}, we indeed have an isometry between $\phi_B$ and $q$, as desired.

The other direction is proved similarly.  Beginning with an $R$-algebra $B$ with a parity factorization $p:P^{\otimes 2} \otimes Q \xrightarrow{\sim} \bigwedge^4 B$, we associate the quadratic map $\phi_B \otimes p^*$ as in (\ref{phiBpstar}); to this, we associate the Clifford algebra $C^0(\tbigwedge^2(B/R) \otimes P\spcheck, Q, \phi_B \otimes p^*)$, which we abbreviate simply $C^0(B)$, with parity factorization
\begin{equation} \label{wootpf}
\tbigwedge^4 C^0(B) \xrightarrow{\sim}
\bigl( \tbigwedge^3(\tbigwedge^2(B/R) \otimes P\spcheck) \otimes (Q\spcheck)^{\otimes 2})^{\otimes 2} \otimes Q.
\end{equation}
From (\ref{wedgewedge1}) we obtain the canonical isomorphism
\begin{align*} 
\tbigwedge^3(\tbigwedge^2(B/R) \otimes P\spcheck) &\cong \tbigwedge^3(\tbigwedge^2(B/R)) \otimes (P\spcheck)^{\otimes 3} \\
&\cong \bigl(\tbigwedge^3(B/R)\bigr)^{\otimes 2} \otimes (P\spcheck)^{\otimes 3} \cong (\tbigwedge^4 B)^{\otimes 2} \otimes (P\spcheck)^{\otimes 3}.
\end{align*}
But now applying the original parity factorization $p:P^{\otimes 2} \otimes Q \xrightarrow{\sim} \tbigwedge^4 B$, we obtain
\[ (\tbigwedge^4 B)^{\otimes 2} \otimes (P\spcheck)^{\otimes 3} \cong (P^{\otimes 2} \otimes Q)^{\otimes 2} \otimes (P\spcheck)^{\otimes 3} \cong P \]
so putting these together, the parity factorization (\ref{wootpf}) becomes simply 
\[ \tbigwedge^4 C^0(B) \cong P^{\otimes 2} \otimes Q. \]

Similarly, putting together (\ref{wedgezero}), (\ref{wedgewedge2}), and the dual isomorphism $p\spcheck$ to $p$, we have
\begin{equation} \label{CRBR}
\begin{aligned} 
C^0(B)/R &= C^0(\tbigwedge^2(B/R) \otimes P\spcheck, Q, \phi_B \otimes p^*)/R \\
&\cong \tbigwedge^2\bigl(\tbigwedge^2(B/R) \otimes P\spcheck \bigr) \otimes Q\spcheck \\
&\cong \tbigwedge^2\bigl(\tbigwedge^2(B/R)\bigr) \otimes (P\spcheck)^{\otimes 2} \otimes Q\spcheck \\
&\cong B/R \otimes \tbigwedge^3(B/R) \otimes (\tbigwedge^4 B)\spcheck \cong B/R.
\end{aligned}
\end{equation}
We now show that there is a unique isomorphism $C^0(B) \xrightarrow{\sim} B$ of $R$-algebras which lifts the map in (\ref{CRBR}).  It suffices to show this locally; the map is well-defined up to addition of scalars, so we may assume that $B$ is free with good basis $1,i,j,k$ (and that $P,Q \cong R$).  But then with this basis it follows that the map (\ref{BtoC}) is the already the unique map which identifies $C^0(B) \cong B$, and the result follows.

In this way, we have exhibited an equivalence of categories between the category of ternary quadratic modules (with morphisms isometries) and the category of quaternion rings $B$ over $R$ equipped with a parity factorization $p$ (with morphisms isomorphisms).  It follows that the set of equivalence classes under isometry and isomorphisms are in functorial bijection.
\end{proof}

\begin{rmk} \label{reducetogrossluc}
We recover the bijection of Gross-Lucianovic (Proposition \ref{GrossLucN3}) from Theorem \ref{mainthmcorr} as follows.  Let $R$ be a PID or a local ring and let $M$ be an $R$-module of rank $3$.  An element of $\Sym^2(M\spcheck) \otimes \bigwedge^3 M$ corresponds to a quadratic map $q:M \to \bigwedge^3 M$, and an isometry between two such forms is specified by an element $g \in GL(M)$ acting as
\[
\xymatrix{
M  \ar[r]^{q} \ar[d]^{g^{-1}}_{\wr} & \bigwedge^3 M \ar[d]_{\wr}^{\wedge^3 g} \\
M \ar[r]^{q'} & \bigwedge^3 M.
} \]
The quaternion algebra $B=C^0(M,\bigwedge^3 M,q)$ is then equipped with the parity factorization 
\[ \tbigwedge^3 M\spcheck \cong (\tbigwedge^3 M \spcheck)^{\otimes 2} \otimes \tbigwedge^3 M \xrightarrow{\sim} \tbigwedge^4 C^0(M,\tbigwedge^3 M, q) = \tbigwedge^4 B. \]
Conversely, given a quaternion ring $B$ over $R$ equipped with an isomorphism $B/R \cong  \xrightarrow{\sim} (\bigwedge^3 M)\spcheck$, composing with the canonical exterior form then yields
\[ \tbigwedge^2 (B/R) \otimes \tbigwedge^3 M \to \tbigwedge^4 B \xrightarrow{\sim} \tbigwedge^3 M. \]
Therefore $GL(M)$ acts as isomorphisms on $B$ via the isomorphism $B/R \cong \tbigwedge^2 M \otimes (\tbigwedge^3 M)\spcheck$, which recovers the result of Gross and Lucianovic (see also Lucianovic \cite[\S 1.7]{Luc}) who identified this action by considering the corresponding action on \emph{good bases} (see \S 4 below).
\end{rmk}

If one wishes only to understand isomorphism classes of quaternion rings, one can consider the functor which forgets the parity factorization.  In this way, certain ternary quadratic modules will be identified.  We define a \emph{twisted discriminant module} to be a quadratic module $(P,Q,d)$ where $P,Q$ are invertible $R$-modules, or equivalently an $R$-linear map $d:P \otimes P \to Q$.  A \emph{twisted isometry} between two quadratic modules $(M,I,q)$ and $(M',I',q')$ is an isometry between $(M \otimes P, I \otimes Q, q \otimes d)$ and $(M',I',q')$ for some twisted discriminant module $(P,Q,d)$.  (For a reference, see Knus \cite[\S III.3]{Knus} or Balaji \cite{Balaji}.)

\begin{cor}
There is a functorial bijection
\begin{equation*}
\left\{ \begin{minipage}{28ex} 
\begin{center}
Twisted isometry classes of \\
ternary quadratic modules \\
$(M,I,q)$ over $R$ 
\end{center} 
\end{minipage}
\right\} \longleftrightarrow \left\{ \begin{minipage}{28ex} 
\begin{center}
Isomorphism classes of \\
quaternion rings $B$ over $R$
\end{center} 
\end{minipage}
\right\}.
\end{equation*}
\end{cor}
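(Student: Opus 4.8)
The plan is to deduce this from Theorem \ref{mainthmcorr} by analyzing the functor which discards the parity factorization. Write $\Phi$ for the bijection of Theorem \ref{mainthmcorr}, sending the isometry class of $(M,I,q)$ to the isomorphism class of $B=C^0(M,I,q)$ together with the parity factorization (\ref{yeahparityfact}), and consider the assignment $(M,I,q)\mapsto C^0(M,I,q)$ taking values in isomorphism classes of quaternion rings. It suffices to show that this assignment is constant on twisted isometry classes, surjective, and separates distinct twisted isometry classes; functoriality in $R$ will then be inherited from that of $\Phi$ and of the Clifford functor. Here I take a twisted discriminant module $(P,Q,d)$ to be nondegenerate, so that $d:P^{\otimes 2}\xrightarrow{\sim}Q$ and in particular $Q\cong P^{\otimes 2}$; this is forced by the requirement that twisting preserve the even Clifford algebra, as the computation below shows.

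For the forward (well-definedness) direction I would show that twisting a ternary quadratic module leaves its even Clifford algebra unchanged as an $R$-algebra, affecting only the parity factorization: for any twisted discriminant module $(P,Q,d)$ there is a canonical $R$-algebra isomorphism
\[ C^0(M\otimes P,\, I\otimes Q,\, q\otimes d)\cong C^0(M,I,q). \]
On underlying modules this follows from (\ref{wedgezero}) together with $d$, since
\[ C^0(M\otimes P,I\otimes Q,q\otimes d)/R\cong\tbigwedge^2(M\otimes P)\otimes(I\otimes Q)\spcheck\cong\tbigwedge^2 M\otimes I\spcheck\otimes\bigl(P^{\otimes2}\otimes Q\spcheck\bigr)\cong\tbigwedge^2 M\otimes I\spcheck. \]
That the multiplication also agrees can be checked locally, where $P$ and $Q$ are free and the twist reduces to replacing $q$ by $\lambda q$ for the unit $\lambda$ determined by $d$; but $(\id,\lambda)$ is an isometry $(M,I,q)\to(M,I,\lambda q)$ and so induces an isomorphism of even Clifford algebras, and these glue. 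Thus the assignment descends to a well-defined map on twisted isometry classes, and it is surjective because every quaternion ring is by definition an even Clifford algebra.

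For injectivity I would run the comparison back through Theorem \ref{mainthmcorr}. Suppose $\psi:C^0(M,I,q)\xrightarrow{\sim}C^0(M',I',q')$ is an isomorphism of $R$-algebras, and set $B=C^0(M,I,q)$ with its parity factorization $p:P^{\otimes 2}\otimes Q\xrightarrow{\sim}\tbigwedge^4 B$ from (\ref{yeahparityfact}), where $P=\tbigwedge^3 M\otimes(I\spcheck)^{\otimes 2}$ and $Q=I$. Pulling back the parity factorization $p'$ of $C^0(M',I',q')$ along $\psi$ yields a second parity factorization $\psi^{*}p':(P')^{\otimes2}\otimes Q'\xrightarrow{\sim}\tbigwedge^4 B$ of the same module, and by construction $\psi$ is an isomorphism of quaternion rings with parity factorization from $(B,\psi^{*}p')$ to $(C^0(M',I',q'),p')$; since $\Phi$ underlies an equivalence of categories, $\Phi^{-1}(B,\psi^{*}p')$ is isometric to $(M',I',q')$, while $\Phi^{-1}(B,p)=(M,I,q)$. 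The composite $p^{-1}\circ(\psi^{*}p')$ exhibits an isomorphism $(P')^{\otimes2}\otimes Q'\cong P^{\otimes2}\otimes Q$. Comparing the two instances of the explicit inverse (\ref{phiBpstar}), the quadratic module $\Phi^{-1}(B,\psi^{*}p')=\bigl(\tbigwedge^2(B/R)\otimes(P')\spcheck,\,Q',\,\phi_B\otimes(\psi^{*}p')^{*}\bigr)$ is obtained from $\Phi^{-1}(B,p)=\bigl(\tbigwedge^2(B/R)\otimes P\spcheck,\,Q,\,\phi_B\otimes p^{*}\bigr)$ by tensoring with the twisted discriminant module $(S,T,e)$ where $S=P\otimes(P')\spcheck$, $T=I\spcheck\otimes I'$, and $e$ is induced by $p^{-1}\circ(\psi^{*}p')$; the isomorphism $(P')^{\otimes2}\otimes Q'\cong P^{\otimes2}\otimes Q$ is precisely the statement $S^{\otimes2}\cong T$, so $(S,T,e)$ is genuinely nondegenerate. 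Hence $(M',I',q')$ is isometric to a twist of $(M,I,q)$, i.e. the two are twisted isometric, as required.

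The main obstacle is the invertible-module bookkeeping in the injectivity step: one must verify that the abstract change-of-parity isomorphism $p^{-1}\circ(\psi^{*}p')$ assembles, through the explicit formula (\ref{phiBpstar}) and the isomorphisms of Lemma \ref{wedgewedge}, into an honest trivialization $e:S^{\otimes2}\xrightarrow{\sim}T$ of the twisting line bundle against its square, rather than merely an abstract isomorphism $S^{\otimes2}\cong T$, and that the twist of the quadratic map $\phi_B\otimes p^{*}$ by $e$ is indeed $\phi_B\otimes(\psi^{*}p')^{*}$. Once this compatibility is in hand everything else is formal, and the displayed functorial bijection follows by passing to isomorphism (resp. twisted isometry) classes.
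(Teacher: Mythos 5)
Your proposal is correct and takes essentially the same route as the paper: both deduce the corollary from Theorem \ref{mainthmcorr} by observing, via the explicit inverse construction (\ref{phiBpstar}), that changing the parity factorization $p$ of a quaternion ring $B$ changes the associated ternary quadratic module precisely by a twist by a nondegenerate twisted discriminant module, so that forgetting $p$ corresponds to passing to twisted isometry classes. Your two additions---the direct verification that $C^0(M\otimes P, I\otimes Q, q\otimes d)\cong C^0(M,I,q)$, and the explicit requirement that $d:P^{\otimes 2}\to Q$ be an isomorphism (which the paper's definition of twisted discriminant module leaves implicit but which is necessary, since twisting by a degenerate $d$ does not preserve the even Clifford algebra)---are details the paper's one-line proof leaves to the reader, not a different method.
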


\begin{proof}
Given a quaternion ring $B$ over $R$, from the trivial parity factorization we obtain the ternary quadratic module $\phi_B: \tbigwedge^2(B/R) \to \tbigwedge^4 B$.  By (\ref{phipstar}), we see that the choice of an (isomorphism class of) parity factorization $p:P^{\otimes 2} \otimes Q \xrightarrow{\sim} \tbigwedge^4 B$ corresponds to twisting $\phi_B$ by $(P\spcheck, (\tbigwedge^4 B)\spcheck \otimes Q, p^*)$, and the result follows.
\end{proof}

The bijection of Theorem \ref{mainthmcorr} is also discriminant-preserving as in the free case, when the proper definitions are made.  We define the \emph{(half-)discriminant} $D(M,I,q)$ of a quadratic module $(M,I,q)$ to be ideal of $R$ generated by $D(q|_N)$ for all free (ternary) submodules $N \subset M$.  Then in this correspondence we have since $D(M,I,q)_\frakp=D(C^0(M,I,q))_\frakp$ since the bijection preserves discriminants in the local (free) case.

\section{Characterizing quaternion rings}

In this section, we consider two consequences of Theorem B.  We prove Theorem A which compares quaternion rings to exceptional rings and then turn to other certain special cases of Theorem B which appear in the literature.

Let $B$ be an $R$-algebra of rank $4$ with a standard involution.  One can compute the `universal' free such algebra $B$, following Gross and Lucianovic \cite{GrossLuc} as follows.  We say a basis $1,i,j,k$ for $B$ is \emph{good} if the coefficient of $j$ (resp.\ $k$, $i$) in $jk$ (resp.\ $ki$, $ij$) is zero.  One can add suitable elements of $R$ to any basis to turn it into a good basis.

\begin{prop} \label{grossluc}
If $B$ is free $R$-algebra of rank $4$ with a standard involution with good basis $1,i,j,k$, then
\begin{align*}
i^2 &= ui-bc & jk &= a\overline{i} + v' k \\
j^2 &= vj-ac & ki &= b\overline{j} + w' i \tag{$U$} \\
k^2 &= wk-ab & ij &= c\overline{k} + u' j
\end{align*}
with $a,b,c,u,v,w,u',v',w' \in R$ which satisfy 
\begin{equation} \label{uuvvww}
\begin{pmatrix} a \\ b \\ c \end{pmatrix} \begin{pmatrix} u' & v' & w' \end{pmatrix} = 0 \quad \text{and} \quad \begin{pmatrix} u'-u \\ v'-v \\ w'-w \end{pmatrix} \begin{pmatrix} u' & v' & w' \end{pmatrix} = 0.
\end{equation}
Conversely, any algebra defined by laws $(U)$ subject to \textup{(\ref{uuvvww})} is an algebra of rank $4$ with standard involution equipped with the good basis $1,i,j,k$.
\end{prop}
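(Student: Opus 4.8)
The plan is to follow Gross and Lucianovic: first use the standard involution to collapse the multiplication table down to a handful of structure constants, and then let associativity produce both the precise shape $(U)$ and the relations (\ref{uuvvww}).

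I would begin by exploiting the standard involution to eliminate most structure constants. Set $u=\trd(i)$, $v=\trd(j)$, $w=\trd(k)$, so that $\overline{i}=u-i$, $\overline{j}=v-j$, $\overline{k}=w-k$, and the squares are already forced: $i^2=ui-\nrd(i)$ and cyclically. Because $\overline{\phantom{x}}$ is an anti-automorphism, the three ``lower'' products are determined by the three ``upper'' ones through $ji=\overline{\overline{i}\,\overline{j}}$, $kj=\overline{\overline{j}\,\overline{k}}$, $ik=\overline{\overline{k}\,\overline{i}}$ (equivalently through identity (\ref{xyyx})). Thus the whole table is governed by the traces $u,v,w$, the norms $\nrd(i),\nrd(j),\nrd(k)$, and the three products $jk,ki,ij$. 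Invoking the good basis removes the coefficient of $j$ from $jk$, of $k$ from $ki$, and of $i$ from $ij$; I then name the surviving off-diagonal data, writing (minus) the $i$-coefficient of $jk$ as $a$ and its $k$-coefficient as $v'$, and cyclically for $b,w'$ and $c,u'$, so that $jk$, $ki$, $ij$ take the form in $(U)$ \emph{except} that their constant terms are not yet pinned down.

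The heart of the argument is associativity, which I would impose on all triples of basis vectors from $\{i,j,k\}$; this is what forces the remaining structure. For instance, comparing $(jj)k$ with $j(jk)$ and reading off the coefficient of $j$ on both sides gives at once that the constant term of $jk$ equals $a(u-u')$; in the same way one obtains the norm identities $\nrd(i)=bc$, $\nrd(j)=ac$, $\nrd(k)=ab$, and, collecting the remaining coefficients, exactly the relations (\ref{uuvvww}). Once $au'=0$ is in hand the constant term $a(u-u')$ simplifies to $au$, so that $jk=a\overline{i}+v'k$ on the nose, and cyclically; this is the mechanism by which $(U)$ acquires its clean form. Throughout I would use the order-$3$ cyclic symmetry $i\mapsto j\mapsto k$, which permutes $(a,b,c)$, $(u,v,w)$ and $(u',v',w')$ cyclically, to reduce the number of genuinely distinct triples that must be checked.

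For the converse I would build the algebra $B=R\oplus Ri\oplus Rj\oplus Rk$ directly from $(U)$: declare the six products of $(U)$, define $\overline{\phantom{x}}$ by $\overline{i}=u-i$ and cyclically, and let the three lower products be the forced values $ji=\overline{\overline{i}\,\overline{j}}$, and so on, exactly as computed in Example \ref{Qfree}. It then remains to verify that this multiplication is associative and that $\overline{\phantom{x}}$ is a standard involution; both are finite checks on basis triples, carried out using (\ref{uuvvww}) at each step and again organized by the cyclic symmetry, while the good-basis property is immediate from the shape of $(U)$. The main obstacle is precisely this associativity bookkeeping: the identities form a large (if finite) system of polynomial relations, and the real work is to organize them so that one reads off \emph{exactly} the norm identities, the constant-term normalizations, and the relations (\ref{uuvvww})—and confirms that no further relations are hiding.
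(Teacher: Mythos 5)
Your proposal is correct and follows essentially the same route as the paper, which also derives $(U)$ and (\ref{uuvvww}) by a direct calculation from associativity constraints on basis triples (the paper cites $j(k\overline{k})=(jk)\overline{k}$ and $(ij)k=i(jk)$, exploited cyclically) and defers the bookkeeping, including the converse, to Lucianovic's thesis. Your sample computation checks out: the $j$-coefficient of $(jj)k=j(jk)$ does force the constant term of $jk$ to be $a(u-u')$, which becomes $au$ once $au'=0$ is extracted from the remaining relations, exactly as you describe.
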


\begin{proof}
The result follows by a direct calculation, considering the consequences of the associative laws like $j(k\overline{k})=(jk)\overline{k}$ and $(ij)k=i(jk)$, the others being obtained by symmetry.  For details, see Lucianovic \cite[Proposition 1.6.2]{Luc}.  
\end{proof}

\begin{rmk} \label{uuvvwwdomain}
If $u'=v'=w'=0$ in ($U$), then $B$ is a quaternion ring over $R$ by the multiplication laws ($Q$) in Example \ref{Qfree}, and if $a=b=c=0$ and $(u,v,w)=(u',v',w')$, then $B$ is an exceptional ring over $R$ by the rules ($E$) in Example \ref{canonexceptfree}.  

Consequently, if $R$ is a domain then the equations (\ref{uuvvww}) hold if and only if one of the cases ($Q$) or ($E$) holds, so a free $R$-algebra of rank $4$ with a standard involution over a domain $R$ is either a quaternion ring or an exceptional ring \cite[Proposition 1.6.2]{Luc}.
\end{rmk}

\begin{prop} \label{closedexcept}
The set of primes $\frakp$ such that $B \otimes R/\frakp = B/\frakp B$ is a quaternion (resp.\ exceptional) ring is closed in $\Spec R$.  

Given an algebra of rank $4$ over $R$ with standard involution, there exists a decomposition $\Spec R_Q \cup \Spec R_E \hookrightarrow \Spec R$ such that the restriction $B_{R_Q}=B \otimes_R R_Q$ is a quaternion ring, $B_{R_E}$ is an exceptional ring, and $\Spec R_Q$ and $\Spec R_E$ are the largest (closed) subschemes with these properties.
\end{prop}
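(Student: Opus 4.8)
The plan is to detect the two loci by two base-change-compatible invariants of $B$. First, I introduce the $R$-linear functional $\tau_B \in \Hom_R(B,R)$ defined by $\tau_B(x) = \Tr(L_x) - 2\trd(x)$, where $L_x : B \to B$ is left multiplication; since both $\Tr \circ L$ and $\trd$ commute with base change, $\tau_{B \otimes_R S} = \tau_B \otimes \id_S$ for every $R \to S$. Second, I use the canonical exterior form $\phi_B$ of Lemma \ref{xwedgey}, viewed as a global section of the locally free sheaf $\Hom(\Sym^2 \tbigwedge^2(B/R), \tbigwedge^4 B)$, which is likewise compatible with base change by the uniqueness in its construction. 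The goal is to prove that over an arbitrary base, $B$ is a quaternion ring if and only if $\tau_B = 0$, and $B$ is exceptional if and only if $\phi_B = 0$; the two loci in the statement are then the zero loci $V(I_Q)$ and $V(I_E)$, where $I_Q = \tau_B(B)$ is the ideal generated by the values of $\tau_B$ and $I_E$ is the ideal generated by the coordinates of $\phi_B$.

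The local computation comes first. At a prime $\frakp$, the module $B_\frakp$ is free and, by Proposition \ref{grossluc}, admits a good basis $1,i,j,k$ with multiplication laws $(U)$ whose parameters satisfy (\ref{uuvvww}). A direct computation of the matrices of left multiplication gives $\tau_B(1) = 0$ and $\tau_B(i) = u'$, $\tau_B(j) = v'$, $\tau_B(k) = w'$, while the method of Example \ref{canonextfree} computes $\phi_{B_\frakp}(x,y,z) = ax^2 + by^2 + cz^2 + (u-u')yz + (v-v')xz + (w-w')xy$. Hence, over any local ring, $\tau_B = 0$ is equivalent to $u' = v' = w' = 0$, and $\phi_B = 0$ is equivalent to $a = b = c = 0$ together with $(u,v,w) = (u',v',w')$. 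By Remark \ref{uuvvwwdomain}, the first condition is exactly the condition that the laws $(U)$ reduce to $(Q)$, so that $B_\frakp$ is a quaternion ring, and the second is exactly the condition that they reduce to $(E)$, so that $B_\frakp$ is exceptional.

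Next I would upgrade these to global statements, which is where the work lies. For exceptional rings this is immediate: Example \ref{canonexceptfree} shows $\phi_B = 0$ for an exceptional ring, while conversely $\phi_B = 0$ forces $B_\frakp$ to be exceptional for every $\frakp$ by the local computation, and then $B$ is exceptional by the local--global principle of Lemma \ref{exceptlocalold}. For quaternion rings no such principle is handed to us, and establishing it is the main obstacle. I resolve it through Theorem \ref{mainthmcorr}: the ternary quadratic module $(\tbigwedge^2(B/R), \tbigwedge^4 B, \phi_B)$ is defined for every $B$, and the computation (\ref{CRBR}) --- which uses only the wedge-power identities of Lemma \ref{wedgewedge} together with (\ref{wedgezero}) and the trivial parity factorization --- produces a canonical isomorphism of $R$-modules $\Psi : C^0(\tbigwedge^2(B/R), \tbigwedge^4 B, \phi_B) \xrightarrow{\sim} B$. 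Wherever $B_\frakp$ is a quaternion ring it has laws $(Q)$, so by Example \ref{canonextfree} the localized form is $q$ and $\Psi_\frakp$ is the algebra isomorphism (\ref{BtoC}) of Example \ref{Qfree}; since the multiplicativity of the module map $\Psi$ may be checked locally, if $B_\frakp$ is a quaternion ring for every $\frakp$ then $\Psi$ is an algebra isomorphism and $B$ is a quaternion ring. Thus being a quaternion ring is a local property, and combining this with the local computation shows that $B \otimes_R S$ is a quaternion ring if and only if $\tau_{B \otimes S} = 0$, and exceptional if and only if $\phi_{B\otimes S} = 0$, for every $R \to S$.

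Finally I would read off closedness and the decomposition. For a prime $\frakp$, the functional $\tau_{B \otimes R/\frakp}$ vanishes if and only if $\tau_B(B) \subseteq \frakp$, and likewise $\phi_{B\otimes R/\frakp} = 0$ if and only if $I_E \subseteq \frakp$; so by the characterizations just proved the quaternion locus is $V(I_Q)$ and the exceptional locus is $V(I_E)$, both closed. Setting $R_Q = R/I_Q$ and $R_E = R/I_E$, the invariants $\tau_{B_{R_Q}}$ and $\phi_{B_{R_E}}$ vanish by construction, so $B_{R_Q}$ is a quaternion ring and $B_{R_E}$ is exceptional; and if $\Spec R/I \hookrightarrow \Spec R$ is any closed subscheme over which $B$ becomes a quaternion ring (respectively exceptional), then $\tau_B(B) \subseteq I$ (respectively the coordinates of $\phi_B$ lie in $I$), i.e.\ $I_Q \subseteq I$ (respectively $I_E \subseteq I$), so $\Spec R_Q$ and $\Spec R_E$ are the largest such subschemes. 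Lastly, each residue field fiber $B \otimes \kappa(\frakp)$ is a quaternion ring or an exceptional ring by Remark \ref{uuvvwwdomain}, so every prime contains $I_Q$ or $I_E$ and $\Spec R_Q \cup \Spec R_E = \Spec R$ as sets, giving the asserted decomposition.
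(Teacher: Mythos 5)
Your proof is correct, and at its core it rests on the same local computation as the paper's: reduce to a free $B$ with a good basis and multiplication laws ($U$), and observe that the quaternion locus is cut out by $(u',v',w')$ while the exceptional locus is cut out by $(a,b,c,u'-u,v'-v,w'-w)$. Where you genuinely diverge is in the globalization. The paper's proof is essentially three lines: it covers $\Spec R$ by basic opens on which $B$ is free, defines $\Spec R_Q=\Spec R/(u',v',w')$ and $\Spec R_E=\Spec R/(a,b,c,u'-u,v'-v,w'-w)$ chart by chart, and asserts these are the largest closed subschemes with the desired properties. This leaves two things implicit: that these ideals are independent of the choice of good basis, so that the chart-by-chart definitions glue to closed subschemes of $\Spec R$; and that $B_{R_Q}$, which is a priori only \emph{locally} of Clifford form, is a quaternion ring in the global sense $B \cong C^0(M,I,q)$. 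Your version supplies both points: realizing the two ideals as the value ideal of the basis-free, base-change-compatible trace defect $\tau_B(x)=\Tr_L(x)-2\trd(x)$ and as the coefficient ideal of the canonical exterior form $\phi_B$ makes the gluing, the maximality, and the identification of the fiberwise loci automatic (your local formulas $\tau_B(i)=u'$ and $\phi_B \cong ax^2+by^2+cz^2+(u-u')yz+(v-v')xz+(w-w')xy$ are correct); and your appeal to the isomorphism $C^0(\tbigwedge^2(B/R),\tbigwedge^4 B,\phi_B)\cong B$ from the proof of Theorem \ref{mainthmcorr} is exactly the missing local-to-global principle for quaternion rings. In effect you also establish the equivalence of (i) and (iii) of Theorem A en route, which the paper defers to Proposition \ref{mondoequiv}. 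One phrasing slip is worth fixing: (\ref{CRBR}) produces a canonical module isomorphism $C^0(B)/R \xrightarrow{\sim} B/R$, not a canonical module map $\Psi:C^0(B)\xrightarrow{\sim} B$ (lifts of the former exist only up to addition of scalars), so rather than "checking multiplicativity of $\Psi$ locally" you should argue, as in the paper's proof of Theorem \ref{mainthmcorr}, that on each member of a cover where $B$ has laws ($Q$) there is a \emph{unique} algebra isomorphism lifting (\ref{CRBR}), and that these local lifts glue precisely because of that uniqueness. With that adjustment the argument is complete.
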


\begin{proof}
Covering $\Spec R$ by (basic) open sets we may assume that $B$ is free over $R$ and so has a good basis as in ($U$).  Define $\Spec R_Q=\Spec R/(u',v',w')$ and $\Spec R_E=\Spec R/(a,b,c,u'-u,v'-v,w'-w)$.  Then $B_Q = B \otimes_R R_Q$ is a quaternion ring over $R_Q$ and it is the largest such subscheme, and similarly $B_E$ is an exceptional ring over $R_E$.  
\end{proof}

\begin{exm}
By gluing an exceptional ring to a quaternion ring, we see is indeed possible for the set in Corollary \ref{closedexcept} to be a proper subset of $\Spec R$: for example, for $k$ a field we can take $R=k[x,y]/(xy)$ and in ($U$) let $a=b=c=x$, $u'=v'=w'=y$, and $u=v=w=x+y$.
\end{exm}

\begin{exm}
Let $k$ be a field and let $R=k[\eps]/(\eps^2)$.  Consider the algebra $B$ with multiplication laws in ($U$) with $a=b=c=u'=v'=w'=\eps$ and $u=v=w=0$.  Then $B$ is neither a quaternion ring nor an exceptional ring; indeed, we have $\Spec R = \{(\eps)\}$ and $R_Q=R_E=R/(\eps)=k$.  
\end{exm}

\begin{lem} \label{ifreduced}
Suppose $R$ is reduced.  Then in the decomposition $\Spec R_Q \cup \Spec R_E \hookrightarrow \Spec R$ the map $R \to R_Q \times R_E$ is surjective.
\end{lem}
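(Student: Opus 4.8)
The plan is to verify surjectivity locally and then reduce it to a statement about two ideals. Surjectivity of the $R$-linear map $R \to R_Q \times R_E$ can be checked after localizing at each maximal ideal $\frakm$, where $B$ is free and so carries a good basis $1,i,j,k$ as in Proposition \ref{grossluc}; there the construction in the proof of Proposition \ref{closedexcept} gives $R_Q = R/I_Q$ and $R_E = R/I_E$ with
\[ I_Q = (u',v',w') \quad\text{and}\quad I_E = (a,b,c,\,u-u',\,v-v',\,w-w'). \]
By the Chinese Remainder Theorem the induced map $R \to R/I_Q \times R/I_E$ is surjective exactly when $I_Q + I_E = R$, and its kernel is $I_Q \cap I_E$, so the lemma reduces to understanding these two ideals.

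The key input I would extract first is that $I_Q I_E = 0$: the two matrix identities (\ref{uuvvww}) say precisely that every generator of $I_E$ annihilates every generator of $I_Q$. Granting this, reducedness controls the kernel at once and yields injectivity, for if $z \in I_Q \cap I_E$ then $z^2 \in I_Q I_E = 0$, so $z$ is nilpotent and hence $z = 0$. The same identity recovers the covering statement of Theorem A, since $V(I_Q) \cup V(I_E) = V(I_Q I_E) = V(0) = \Spec R$.

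The main obstacle is the complementary assertion $I_Q + I_E = R$, equivalently the disjointness of the quaternion locus $V(I_Q)$ and the exceptional locus $V(I_E)$. Their intersection is $V(I_Q + I_E)$, the set of primes $\frakp$ modulo which all of $a,b,c,u,v,w,u',v',w'$ vanish, i.e.\ where $B \otimes R/\frakp \cong (R/\frakp)[i,j,k]/(i,j,k)^2$ is commutative. The formal identity $R/(I_Q \cap I_E) \cong R/I_Q \times_{R/(I_Q+I_E)} R/I_E$ shows that the image of $R$ is the fiber product of $R_Q$ and $R_E$ over this commutative locus, so surjectivity onto the full product is exactly the statement that the commutative overlap is empty. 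I expect this separation to be the delicate point, since the nilpotent argument above only gives injectivity; pinning it down is where the reducedness hypothesis must be used in an essential way, after which a gluing argument in the style of Lemma \ref{exceptlocalold} and Proposition \ref{closedexcept} would descend the local conclusion to $R$.
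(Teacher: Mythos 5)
Your first argument --- extracting $I_Q I_E = 0$ from the relations (\ref{uuvvww}), so that any $z \in I_Q \cap I_E$ satisfies $z^2 = 0$ and hence vanishes when $R$ is reduced --- is correct, and it is in substance the paper's entire proof. The paper reaches the same conclusion pointwise rather than ideal-theoretically: for each prime $\frakp$, the quotient $R/\frakp$ is a domain, so by Remark \ref{uuvvwwdomain} the algebra $B/\frakp B$ is a quaternion ring or an exceptional ring, so $\frakp$ contains $I_Q$ or $I_E$ and hence contains $I_Q \cap I_E$; therefore $\ker(R \to R_Q \times R_E) \subseteq \bigcap_\frakp \frakp = (0)$. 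Both routes prove exactly the same statement: the kernel is zero, equivalently $V(I_Q) \cup V(I_E) = V(I_Q \cap I_E) = \Spec R$.

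The step you defer as ``the main obstacle'' --- proving $I_Q + I_E = R$ --- is not in the paper's proof, and it cannot be supplied, because it is false for reduced $R$. The paper's own example immediately preceding this lemma is a counterexample: $R = k[x,y]/(xy)$ is reduced, and taking $a=b=c=x$, $u'=v'=w'=y$, $u=v=w=x+y$ gives $I_Q = (y)$ and $I_E = (x)$, so $I_Q + I_E = (x,y) \neq R$; concretely, the image of $R \to R_Q \times R_E \cong k[x] \times k[y]$ is the proper subring of pairs agreeing at the origin, exactly your fiber-product description. This is consistent with the sentence following Theorem A, which explicitly allows primes in $\Spec R_Q \cap \Spec R_E$. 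So the word ``surjective'' in the statement is a slip: the property that is true, that the paper's proof actually establishes (its displayed conclusion is a kernel computation), and that Theorem A needs, is that $R \to R_Q \times R_E$ is \emph{injective} --- equivalently, that $\Spec R_Q \sqcup \Spec R_E \to \Spec R$ is surjective, i.e., the two closed subschemes cover $\Spec R$. Your proposal is therefore complete once you stop after the nilpotence argument; the disjointness of the quaternion and exceptional loci that you flag as the delicate remaining point is not delicate but false, and no use of reducedness can rescue it.
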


\begin{proof}
For each $\frakp \in \Spec R$, the ring $B/\frakp B$ is either quaternion ring or an exceptional ring (or both) by Remark \ref{uuvvwwdomain}.  Consequently, we have 
\[ \ker (R \to R_Q \times R_E) \subset \bigcap_{\frakp} \frakp = (0) \]  
so $R \to R_Q \to R_E$ is surjective.
\end{proof}

\begin{rmk}
Note that if $\Spec R_{QE}=\Spec R_Q \cap \Spec R_E$, then $B_{R_{QE}}$ is everywhere locally isomorphic to $R_\frakp[i,j,k]/(i,j,k)^2$ for $\frakp$ a prime of $R_{QE}$.
\end{rmk}

We continue in the direction of proving Theorem A with the following proposition.

\begin{prop} \label{exceptfree}
Let $B$ be an $R$-algebra of rank $4$ with a standard involution.  Then $B$ is an exceptional ring if and only if the canonical exterior form $\phi_B$ of $B$ is identically zero.
\end{prop}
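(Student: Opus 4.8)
The plan is to reduce to the free local case and there to compute the canonical exterior form directly from the universal multiplication table ($U$) of Proposition \ref{grossluc}; the equivalence then becomes a comparison of structure constants.

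First I would note that both sides of the asserted equivalence are local. The form $\phi_B$ is a section of $\Sym^2(\tbigwedge^2(B/R))\spcheck \otimes \tbigwedge^4 B$, and such a section vanishes if and only if it vanishes after localizing at every prime; moreover, the gluing construction in the proof of Lemma \ref{xwedgey} shows $\phi_{B_\frakp} = (\phi_B)_\frakp$, so $\phi_B \equiv 0$ if and only if $\phi_{B_\frakp} \equiv 0$ for all $\frakp$. On the other hand, $B$ is exceptional if and only if $B_\frakp$ is exceptional for every $\frakp$, by Lemma \ref{exceptlocalold}. Since $R_\frakp$ is local, $B_\frakp$ is free and hence admits a good basis $1,i,j,k$ obeying the laws ($U$) subject to (\ref{uuvvww}). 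It therefore suffices to treat the case in which $R$ is local and $B$ is presented by ($U$).

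The heart of the matter is to compute $\phi_B$ for the table ($U$), extending the calculation of Example \ref{canonextfree}. Writing $e_1 = j\wedge k$, $e_2 = k\wedge i$, $e_3 = i\wedge j$ and identifying $\tbigwedge^4 B \cong R$ by $1\wedge i\wedge j\wedge k \mapsto -1$, the value $\phi_B(e_1) = 1\wedge j\wedge k\wedge jk$ sees only the $i$-component of $jk = a\overline{i} + v'k$ (the $au$ and $v'k$ terms wedge to zero), so $\phi_B(e_1) = a$, and cyclically $\phi_B(e_2) = b$, $\phi_B(e_3) = c$, just as in the quaternion case. For a cross term I would expand $T(e_1,e_2) = \phi_B(e_1+e_2) - \phi_B(e_1) - \phi_B(e_2) = -1\wedge k\wedge i\wedge kj - 1\wedge k\wedge j\wedge ki$; the new feature compared with Example \ref{canonextfree} is that the $w'i$-term of $ki = b\overline{j} + w'i$ now contributes, once one computes the reverse product $kj$ (and, for the other cross terms, $ik$ and $ji$) from the table using (\ref{xyyx}). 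Tracking the signs in $\tbigwedge^4 B$ yields $T(e_1,e_2) = w - w'$, and by the evident cyclic symmetry of ($U$) the upshot is that, in the coordinates dual to $e_1,e_2,e_3$, the canonical exterior form is represented by
\[ \phi_B(x,y,z) = ax^2 + by^2 + cz^2 + (u-u')yz + (v-v')xz + (w-w')xy. \]
I expect this off-diagonal bookkeeping to be the main obstacle, although it is purely mechanical once the reverse products are in hand.

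Finally I would read off both implications. Because $\phi_B$ is a section of $\Sym^2 M\spcheck \otimes \tbigwedge^4 B$, ``identically zero'' means that all six coefficients vanish---there is no characteristic-$2$ subtlety, since these coefficients are literally the coordinates of the section---so $\phi_B \equiv 0$ if and only if $a=b=c=0$ and $u=u'$, $v=v'$, $w=w'$. This is exactly the condition under which ($U$) collapses to the exceptional table ($E$) of Example \ref{canonexceptfree} (compare Remark \ref{uuvvwwdomain}), i.e.\ under which $B_\frakp$ is exceptional. Combining with the localizations of the first paragraph closes both directions: if $B$ is exceptional then each $\phi_{B_\frakp}$ vanishes by Example \ref{canonexceptfree}, whence $\phi_B \equiv 0$; and if $\phi_B \equiv 0$ then every $B_\frakp$ has table ($E$) and so is exceptional, whence $B$ is exceptional by Lemma \ref{exceptlocalold}.
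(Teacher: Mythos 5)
Your proposal is correct and follows essentially the same route as the paper's proof: localize via Lemma \ref{exceptlocalold}, reduce to the good-basis presentation ($U$), and compute the canonical exterior form to see that its vanishing forces $a=b=c=0$ and $u=u'$, $v=v'$, $w=w'$, which is exactly the exceptional table ($E$). The only difference is cosmetic: you work out the full form $ax^2+by^2+cz^2+(u-u')yz+(v-v')xz+(w-w')xy$ in coordinates, while the paper evaluates $\phi_B$ at just enough elements (using the standard involution to recover the reverse products) to extract the same coefficients.
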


\begin{proof}
If $B$ is an exceptional ring, then the canonical exterior form $\phi_B$ is zero since it is zero locally by Example \ref{canonexceptfree}.  

Conversely, suppose that the canonical exterior form $\phi_B$ is zero.  Since being an exceptional ring is a local property (Lemma \ref{exceptlocalold}), we may assume that $B$ is free over $R$ with multiplication laws ($U$).  Applying the standard involution, we have
\[ (u-i)(w-k)=\overline{i}\,\overline{k}=\overline{ki}=bj + w'\overline{i}=bj+w'(u-i) \]
so 
\[ ik=u(w'-w)-(w'-w)i+bj+uk. \]
and similarly we have the products $ji$ and $kj$.  

We again identify $\tbigwedge^4 B \xrightarrow{\sim} R$ by $1 \wedge i \wedge j \wedge k \mapsto -1$.  We compute that $1 \wedge i \wedge j \wedge ij = c=0$ and by symmetry $a=b=0$.  Similarly, we have
\[ \phi(i \wedge (j-k)) = - 1\wedge i \wedge k \wedge ij - 1 \wedge i \wedge j \wedge ik = -u'+u=0. \]
Thus $u'=u$, and by symmetry $v'=v$ and $w'=w$.  It follows then that $B=R \oplus M$ is an exceptional ring with $M=Ri\oplus Rj\oplus Rk$.
\end{proof}

For $x \in B$, let $\chi_L(x;T)$ (resp.\ $\chi_R(x;T)$) denote the characteristic polynomial of left (resp.\ right) multiplication by $x$ and $\Tr_L(x)$ (resp.\ $\Tr_R(x)$) denotes the trace of left (resp.\ right) multiplication by $x$.  These are defined locally given a choice of basis, but then are unique and so can be defined globally; see \cite[Lemma 1.4]{Voightlowrank}.  Let $\mu(x;T)=T^2-\trd(x)T+\nrd(x)$.

\begin{prop} \label{mondoequiv}
Let $B$ be a free $R$-algebra of rank $4$ with a standard involution.  Then the following are equivalent:
\begin{enumroman}
\item[(i)] $B$ is a quaternion ring;
\item[(ii${}_L$)] $\chi_L(x;T) = \mu(x;T)^2$ for all $x \in B$;
\item[(ii${}_R$)] $\chi_R(x;T) = \mu(x;T)^2$ for all $x \in B$;
\item[(iii${}_L$)] $2\trd(x) = \Tr_L(x)$ for all $x \in B$;
\item[(iii${}_R$)] $2\trd(x) = \Tr_R(x)$ for all $x \in B$;
\end{enumroman}
If $2$ is a nonzerodivisor in $R$, then these are further equivalent to
\begin{enumroman}
\item[(iv)] $\chi_L(x;T)=\chi_R(x;T)$ for all $x \in B$.
\end{enumroman}
\end{prop}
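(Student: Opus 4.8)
The plan is to fix a good basis $1,i,j,k$ for $B$ and work with the explicit multiplication laws $(U)$ of Proposition \ref{grossluc}, reducing every condition to the vanishing of the three parameters $u',v',w'$. First I would record, exactly as in the proof of Proposition \ref{exceptfree}, the three ``opposite'' products $ji,kj,ik$ obtained by applying the standard involution to $(U)$; these have the shape $ji = (u'-u)v + vi - (u'-u)j + ck$ and its cyclic permutations. With these in hand, the diagonal entries of the matrix of left multiplication $L_x$ in the basis $1,i,j,k$ can be read off for $x = x_0 + x_1 i + x_2 j + x_3 k$, and summing them yields the key identity
\[ \Tr_L(x) = 2\trd(x) + (u'x_1 + v'x_2 + w'x_3). \]
Since the standard involution satisfies $\overline{yx} = \overline{x}\,\overline{y}$, right multiplication $R_x$ is conjugate to $L_{\overline{x}}$ by the $R$-linear automorphism $\overline{\phantom{x}}$, so that $\Tr_R(x) = \Tr_L(\overline{x})$ and $\chi_R(x;T) = \chi_L(\overline{x};T)$; plugging in $\overline{x}$ (whose $i,j,k$-coordinates are $-x_1,-x_2,-x_3$, with $\trd(\overline{x}) = \trd(x)$) gives $\Tr_R(x) = 2\trd(x) - (u'x_1 + v'x_2 + w'x_3)$.

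These two trace formulas do almost all the work. Conditions (iii${}_L$) and (iii${}_R$) each assert that $u'x_1+v'x_2+w'x_3$ vanishes identically in $x_1,x_2,x_3$, hence each is equivalent to $u'=v'=w'=0$; by Remark \ref{uuvvwwdomain} (or directly Example \ref{Qfree}) this collapses $(U)$ to the laws $(Q)$, so that $B \cong C^0(R^3,q)$ is a quaternion ring. This gives (iii${}_L$) $\Rightarrow$ (i) and (iii${}_R$) $\Rightarrow$ (i). Conversely (ii${}_L$) $\Rightarrow$ (iii${}_L$) and (ii${}_R$) $\Rightarrow$ (iii${}_R$) are immediate: comparing the coefficient of $T^3$ in $\chi_L(x;T)=\mu(x;T)^2$ (resp.\ $\chi_R$) against $\mu(x;T)^2 = T^4 - 2\trd(x)T^3 + \cdots$ forces $\Tr_L(x) = 2\trd(x)$ (resp.\ $\Tr_R$).

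It then remains to prove (i) $\Rightarrow$ (ii${}_L$) and (i) $\Rightarrow$ (ii${}_R$), which close the cycles (i) $\Rightarrow$ (ii${}_\bullet$) $\Rightarrow$ (iii${}_\bullet$) $\Rightarrow$ (i) and thereby identify all five conditions. Here I would argue that the equalities $\chi_L(x;T) = \mu(x;T)^2$ and $\chi_R(x;T)=\mu(x;T)^2$ are intrinsic, so it suffices to verify them for $C^0(R^3,q)$ with laws $(Q)$; this is a polynomial identity in $\Z[a,b,c,u,v,w,x_0,x_1,x_2,x_3][T]$, and being an identity over a domain it may be checked after specializing the parameters to an algebraically closed field $k$. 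On the dense open locus where the discriminant $D(q)$ of (\ref{Dq}) is nonzero, $B=C^0(k^3,q)$ is a quaternion algebra over $k$, and for every $x$ the characteristic polynomial of left (resp.\ right) multiplication equals the square of the reduced characteristic polynomial $\mu(x;T)$; since both sides are polynomial in the parameters and agree on a Zariski-dense set, the identity holds universally and hence over any $R$.

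Finally, for (iv) under the hypothesis that $2$ is a nonzerodivisor: the implication (i) $\Rightarrow$ (iv) is free, since then $\chi_L(x;T)=\chi_R(x;T)=\mu(x;T)^2$. For the converse, comparing the coefficient of $T^3$ in (iv) gives $\Tr_L(x)=\Tr_R(x)$, so the two trace formulas force $2(u'x_1 + v'x_2 + w'x_3)=0$ for all $x_1,x_2,x_3 \in R$; taking $x=i,j,k$ yields $2u'=2v'=2w'=0$, and since $2$ is a nonzerodivisor we conclude $u'=v'=w'=0$, i.e.\ (i). I expect the main obstacle to be the bookkeeping in the trace computation---deriving the opposite products correctly and tracking all nine parameters of $(U)$---together with giving a clean justification of $\chi_L=\mu^2$ in the quaternion case without grinding out a $4\times 4$ determinant; the universal polynomial identity reduction of the third paragraph is precisely the device that sidesteps the latter.
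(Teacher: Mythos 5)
Your proposal is correct in its essentials but takes a genuinely different route from the paper's proof. The paper also reduces to a good basis and the laws $(U)$ of Proposition \ref{grossluc}, but it then asserts explicit factorizations of the full characteristic polynomials of the generic element $\xi = xi+yj+zk$ over $R[x,y,z]$, namely $\chi_L(\xi;T) = \mu(\xi;T)\bigl(\mu(\xi;T)-(u'+v'+w')T\bigr)$ and $\chi_R(\xi;T)=\mu(\xi;T)\bigl(\mu(\xi;T)+(u'+v'+w')T\bigr)$, and reads all of the equivalences off these. You compute only traces, and your identities are the correct ones: summing the diagonal of $L_x$ computed from $(U)$ and the derived products $ji,kj,ik$ gives $\Tr_L(x)=2\trd(x)+(u'x_1+v'x_2+w'x_3)$, and your conjugation $R_x = \overline{\phantom{x}}\circ L_{\overline{x}}\circ\overline{\phantom{x}}$ correctly yields $\Tr_R(x)=2\trd(x)-(u'x_1+v'x_2+w'x_3)$ and $\chi_R(x;T)=\chi_L(\overline{x};T)$. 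Your trace identity in fact exposes a defect in the paper's display: comparing $T^3$-coefficients there would make $\Tr_L(\xi)-2\trd(\xi)$ equal to the \emph{constant} $u'+v'+w'$ rather than a linear form in $x,y,z$; and even after replacing $u'+v'+w'$ by $u'x+v'y+w'z$, the claimed factorization fails on exceptional rings, which satisfy the hypotheses of the proposition: under the laws $(E)$ of Example \ref{canonexceptfree} one has $\mu(\xi;T)=T(T-n)$ with $n=\trd(\xi)$ and $\chi_L(\xi;T)=T(T-n)^3$, whereas $\mu(\xi;T)\bigl(\mu(\xi;T)-sT\bigr)$ is divisible by $T^2$ for every $s$. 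So the paper's displayed formula needs correction (the proposition itself is unaffected), while your trace-only computation is sound. The cost of this economy is that (i) $\Rightarrow$ (ii${}_L$), (ii${}_R$) no longer drops out of an explicit formula; your replacement --- treating $\chi_L(\xi;T)=\mu(\xi;T)^2$ under the laws $(Q)$ as a polynomial identity with coefficients in $\Z[a,b,c,u,v,w,x_0,x_1,x_2,x_3]$ and checking it on the Zariski-dense locus $D(q)\neq 0$ over $\C$, where $C^0(\C^3,q)$ is a quaternion algebra and the classical identity $\chi_L=\chi_R=\mu^2$ holds --- is valid, since the structure constants in $(Q)$ are free parameters and a nonzero integer polynomial cannot vanish identically on $\C^N$.

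One step does need shoring up, in (i) $\Rightarrow$ (ii${}_L$), (ii${}_R$): ``intrinsic'' is not by itself enough to reduce to $C^0(R^3,q)$. A quaternion ring is by definition $C^0(M,I,q)$ with $M,I$ merely projective; even when $B$ is free as an $R$-module it is not automatic that $(M,I,q)$ may be taken free, so $B$ need not visibly carry the laws $(Q)$ in any basis, and basis-independence alone does not complete the reduction. Either of two standard fixes works: observe that the conditions (ii${}_L$), (ii${}_R$) are local --- characteristic polynomials and $\mu$ commute with localization, and an identity among their coefficients holds in $R$ if and only if it holds in $R_\frakp$ for every prime $\frakp$, where $M_\frakp$ and $I_\frakp$ become free --- or apply Theorem \ref{mainthmcorr} with the trivial parity factorization to conclude $B \cong C^0(\tbigwedge^2(B/R), \tbigwedge^4 B, \phi_B)$, whose underlying modules are free when $B$ is. Either fix is one line; note that the same point is silently needed for the paper's own opening assertion that a free $B$ with good basis is a quaternion ring if and only if $u'=v'=w'=0$.
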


\begin{proof}
As in the proof of Proposition \ref{closedexcept}, we may assume that $B$ is free with a good basis, in which case $B$ is a quaternion ring if and only if $u'=v'=w'=0$.

Let $\xi=xi+yj+zk \in B \otimes_R R[x,y,z]$.  Then we compute that
\[ \mu(\xi;T)=T^2-(ux+vy+wz)T +n(x,y,z) \]
where
\[ -n(x,y,z)=bcx^2 + (uv-cw)xy + (uw-bv)xz + acy^2 + (vw-au)yz + abz^2, \]
and moreover
\begin{align*} 
\chi_L(\xi;T) &= \mu(\xi;T)(\mu(\xi;T) - (u'+v'+w')T) \\
\chi_R(\xi;T) &= \mu(\xi;T)(\mu(\xi;T) + (u'+v'+w')T).
\end{align*}
Each of the equivalences now follow easily.
\end{proof}

This general characterization allows us to distinguish quaternion rings and exceptional rings as follows.

We now put the pieces together and prove Theorem A.

\begin{proof}[Proof of Theorem A]
We combine Proposition \ref{mondoequiv}, Corollary \ref{closedexcept}, Lemma \ref{ifreduced}, and Proposition \ref{exceptfree}.  To prove the equivalence (iv), we 
 recall that the only free algebra which is both a quaternion ring and an exceptional ring is commutative, so to prove that $R=R_Q$ it suffices to note that $\Spec R_E \subset \Spec R_Q$ if and only if for every maximal ideal $\frakm \in \Spec R_E$ we have that $B \otimes R/\frakm$ is commutative.
\end{proof}

We now conclude this paper by relating our notion of quaternion ring to two common interpretations in the literature.  

First, we consider Azumaya algebras.  An $R$-algebra $B$ is \emph{Azumaya} if $B$ is central and \emph{$R$-simple} (or \emph{ideal}, as in Rao \cite{Rao}), that is to say every two-sided ideal $I$ of $B$ is of the form $\fraka B$ with $\fraka = I \cap R$, or equivalently that any $R$-algebra homomorphism $B \to B'$ is either the zero map or injective.  Equivalently, $B$ is Azumaya if and only if $B/\frakm B$ is a central simple algebra over the field $R/\frakm$ for all maximal ideals $\frakm$ of $R$, or if the map $B^e=B \otimes_R B^o \to \End_R B$ by $x \otimes y \mapsto (z \mapsto xzy)$ is an isomorphism, where $B^o$ is the opposite algebra.  (For a proof of these equivalences, see Auslander and Goldman \cite{AusGold} or Milne \cite[\S IV.1]{Milne}.)

Suppose that $B$ is an $R$-algebra of rank $4$ with a standard involution.  Then if $B$ is Azumaya then in particular $B$ is a quaternion ring.  A quaternion ring is Azumaya if and only if $D(B) = R$, or equivalently if the twisted isometry class of ternary quadratic modules associated to $B$ is semiregular (i.e. $D(M,I,q)=R$).

Next, we consider crossed products; we compare our notion of quaternion ring to that of Kanzaki \cite{Kanzaki0,Kanzaki}.  

Let $S$ be a quadratic $R$-algebra, let $J$ be an $S$-module which is projective of rank $2$ as an $R$-module and let $b:J \times J \to S$ be a \emph{Hermitian form} on $J$, an $S$-bilinear form which satisfies $\overline{b(x,y)}=b(y,x)$ for all $x,y \in J$, where $\overline{\phantom{x}}$ is the standard involution on $S$.  With this data, we define the $R$-algebra $\quat{S,J,b}{R}=S \oplus J$ with the multiplication rules $xy=b(x,y)$ for $x,y \in J$ and $xu=\overline{u}x$ for $u \in S$ and $x \in J$.  In particular, we note that $x^2 = b(x,x) \in R$ for all $x \in J$.  One can verify in a straightforward way that $\quat{S,J,b}{R}$ is indeed an (associative) $R$-algebra equipped with a standard involution given by $x \mapsto -x$ on $J$.

\begin{rmk}
If $B=S \oplus J$ is an $R$-algebra of rank $4$ with a standard involution such that $S$ is a quadratic $R$-algebra and $J$ an $S$-module, then it is not necessarily true that $J^2 \subset S$ as in the example above.  Indeed, the free $R$-algebra $B=R\oplus Ri \oplus Rj \oplus R ij$ with $S=R \oplus Ri$ and $J=Sj=Rj\oplus Rij$ subject to $i^2=i$, $j^2=j$, and $ij+ji=i+j$ has $j^2=j(ij)=j \not\in S$.
\end{rmk}

The $S$-module $J$ is \emph{traceable} if $S$ and $J$ give the same trace map $\Tr:S \to R$, i.e., if the trace of the $R$-linear endomorphism given by (left) multiplication on $S$ and on $J$ are equal.  An $R$-algebra $B$ which is of the form $B \cong \quat{S,J,b}{R}$ where $J$ is traceable is called a \emph{crossed product}.  

Every crossed product $B \cong \quat{S,J,b}{R}$ is a quaternion ring.  Indeed, by hypothesis, we have $\Tr(x)=2\trd(x)$ for all $x \in S$ and $\Tr(x)=0=2\trd(x)$ for all $x \in J$, so $B$ is a quaternion ring by Theorem A.  (This is also easy to see if $J$ is projective over $S$: then there exists $u \in S$ and $x \in J$ such that $x,ux$ are $R$-linearly independent, hence $\phi(u \wedge x)=1 \wedge u \wedge x \wedge ux \neq 0$, and making this argument locally we conclude by Proposition \ref{exceptfree} that $B=\quat{S,J,b}{R}$ is nowhere locally isomorphic to an exceptional ring, and in particular $B$ is a quaternion ring.

\begin{prop}
A quaternion ring $B$ is a crossed product if and only if it corresponds (in the bijection of Theorem B) to a ternary quadratic module $(M,I,q)$ such that $M$ can be written as an orthogonal direct sum $M \cong Q \oplus N$ where $Q,N$ are of ranks $1,2$.  
\end{prop}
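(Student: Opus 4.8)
The plan is to deduce both implications from Theorem \ref{mainthmcorr} by reformulating the crossed product condition in terms of the canonical exterior form. Under the bijection of Theorem \ref{mainthmcorr} (or rather its twisted-isometry version in the Corollary) the module $M$ is identified with $\tbigwedge^2(B/R)$ up to tensoring by an invertible $R$-module, and $q$ with $\phi_B$ up to an isomorphism of the target invertible module. Since tensoring by an invertible module carries orthogonal decompositions to orthogonal decompositions in both directions, the condition that $M \cong Q \oplus N$ with $\rk Q = 1$ and $\rk N = 2$ orthogonally is equivalent to the condition that $\tbigwedge^2(B/R) = L \oplus K$ is a $\phi_B$-orthogonal direct sum with $\rk L = 1$ and $\rk K = 2$; this latter condition is manifestly invariant under twisted isometry, hence depends only on the isomorphism class of $B$. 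I would therefore prove: $B$ is a crossed product if and only if $\tbigwedge^2(B/R)$ admits such a $\phi_B$-orthogonal splitting.

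For the forward direction, suppose $B = \quat{S,J,b}{R} = S \oplus J$. As $S \supset R$ has rank $2$ and $J$ has rank $2$, there is a canonical decomposition
\[ \tbigwedge^2(B/R) = \bigl((S/R) \otimes J\bigr) \oplus \tbigwedge^2 J \]
into summands of ranks $2$ and $1$, and it remains only to check $\phi_B$-orthogonality. This may be done locally: choosing $\omega$ lifting a generator of $S/R$ and $x_1,x_2$ generating $J$, the mixed bilinear values are the $T(\omega \wedge x_a, x_1 \wedge x_2)$, and a direct computation of $1 \wedge \omega \wedge x_a \wedge (\,\cdot\,)$ using the crossed-product rules $x_ax_b = b(x_a,x_b) \in S$ and $x_a u = \overline{u}x_a \in J$ for $u \in S$ shows that these vanish. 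Hence the displayed splitting is orthogonal and $B$ corresponds to a module of the required shape.

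For the converse, assume $\tbigwedge^2(B/R) = L \oplus K$ orthogonally with $\rk L = 1$ and $\rk K = 2$. Using the perfect pairing $B/R \times \tbigwedge^2(B/R) \to \tbigwedge^3(B/R) = \tbigwedge^4 B$, $(w,\eta) \mapsto w \wedge \eta$, set
\[ S^0 = \{w \in B/R : w \wedge K = 0\}, \qquad J = \{w \in B/R : w \wedge L = 0\}, \]
so that $B/R = S^0 \oplus J$ with $\rk S^0 = 1$ and $\rk J = 2$; choosing the splitting $B = R \oplus B/R$ afforded by $R$ being a direct summand, put $S = R \oplus S^0$. To identify $B$ with a crossed product I would work locally. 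There one may choose a good basis $1,i,j,k$ adapted to the splitting, so that in the notation of Example \ref{canonextfree} one has $L = R(j \wedge k)$, $K = R(k \wedge i) \oplus R(i \wedge j)$ and $S^0 = Ri$, $J = Rj \oplus Rk$; orthogonality then forces the coefficients $v = w = 0$ in the form (\ref{abcuvw}) and hence in the laws $(Q)$. Reading off $(Q)$ with $v = w = 0$, one checks directly that $S = R \oplus Ri$ is a quadratic subalgebra ($i^2 = ui - bc$), that $JJ \subseteq S$ (e.g.\ $j^2 = -ac$, $jk = a(u-i)$, $kj = ai$), that $SJ, JS \subseteq J$ (e.g.\ $ij = -ck$, $ji = uj + ck$), and that $xu = \overline{u}x$ for $x \in J$ and $u \in S$ (e.g.\ $ji = uj + ck = \overline{i}\,j$). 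Thus $B \cong \quat{S,J,b}{R}$ with $b$ the restriction of multiplication to $J \times J$.

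The main obstacle is globalizing this last step, namely confirming that $S$, $J$, and $b$ are defined over $R$ rather than only locally. This I would handle by noting that $S^0$, $J$, and $S = R \oplus S^0$ are already defined globally via the orthogonal splitting and the pairing above, while the remaining assertions --- the containments $S^0 S^0 \subseteq S$, $SJ \subseteq J$, $JS \subseteq J$, $JJ \subseteq S$, and the equality of $R$-linear maps expressing $xu = \overline{u}x$ --- are all conditions on images and equalities of $R$-module homomorphisms, and so may be checked after localization, where they hold by the computation above. Therefore $B \cong \quat{S,J,b}{R}$ over $R$, completing the characterization.
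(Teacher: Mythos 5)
Your reformulation via Theorem \ref{mainthmcorr} and your forward direction are sound and essentially identical to the paper's: for $B = \quat{S,J,b}{R}$ one decomposes $\tbigwedge^2(B/R) \cong \bigl((S/R)\otimes J\bigr) \oplus \tbigwedge^2 J$ and checks orthogonality locally from the crossed-product rules. The gap is in your converse, precisely at the step you flag as the main obstacle. The submodules $S^0, J \subset B/R$ are indeed canonical, but the assertions you want ($S^0S^0 \subseteq S$, $JJ \subseteq S$, $SJ, JS \subseteq J$, $xu=\overline{u}x$) concern lifts of these modules \emph{into $B$}, and you produce those lifts from an arbitrarily chosen splitting $B \cong R \oplus B/R$. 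Your local verification, however, is carried out in a good basis $1,i,j,k$, and goodness pins the lifts down uniquely: if $1,i,j,k$ is good, then $1,i+\lambda,j+\mu,k+\nu$ is good only when $\lambda=\mu=\nu=0$ (for instance, the coefficient of $j+\mu$ in $(j+\mu)(k+\nu)$ equals $\nu$). Consequently a good basis \emph{adapted to a prescribed splitting} need not exist, and the localization of your global splitting will in general not be the good-basis lift. For such a lift the assertions genuinely fail: with the laws $(Q)$ and $v=w=0$, the lift $R(j+\mu)\oplus R(k+\nu)$ of $J$ and the (canonical) lift $S=R\oplus Ri=\pi^{-1}(S^0)$ give
\[ (j+\mu)(k+\nu) \;=\; au+\mu\nu - ai + \nu j + \mu k, \]
which lies in $S$ if and only if $\mu=\nu=0$. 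Checking a statement locally is legitimate only once the global object being checked is the right one; here it is not, so the final paragraph of your proof does not go through.

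This is exactly why the paper runs the converse in the opposite direction, on the Clifford side: starting from $(M,I,q)$ with $M = Q \oplus N$ orthogonal, the presentation (\ref{whoocliffnelly}) produces \emph{canonical} submodules of $C^0(M,I,q)$, namely $S = \bigl(R \oplus (N \otimes N \otimes I\spcheck)\bigr)/J^0(q|_N)$ and the image $J$ of $N \otimes Q \otimes I\spcheck$, defined globally by the construction with no choice of splitting; only the verification that $J$ is an $S$-module and that multiplication is Hermitian is then done locally, which is valid because those are statements about globally defined maps. The crossed-product structure is transported to $B$ along the isomorphism $B \cong C^0(M,I,q)$ furnished by Theorem \ref{mainthmcorr}. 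To salvage your intrinsic approach you would need to show that the good-basis lifts of $J$ are independent of the local basis chosen (hence glue to a canonical submodule of $B$); that is plausible but requires a separate argument, and the Clifford-side construction is what replaces it in the paper.
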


\begin{proof}
We first compute the canonical exterior form $\phi$ corresponding to a crossed product $B=\quat{S,J,b}{R}$.  We find that the domain of $\phi$ is
\[ \tbigwedge^2(B/R) \cong \tbigwedge^2(S/R \oplus J) \cong (S/R \otimes_R J) \oplus \tbigwedge^2 J \]
and we claim that in fact the summands $S/R \otimes_R J$ and $\tbigwedge^2 J$ are orthogonal under $\phi$.  It is enough to prove this locally, in which case we can write $S=R \oplus Ri$ and $J=(R \oplus Ri)j=Rj \oplus Rk$ with $i,j \in B$ and $k=ij$.  Then $i \wedge j,k \wedge i,j \wedge k$ is a basis for $\tbigwedge^2(B/R)$, and we compute that
\[ \phi(j \wedge (k-i)) - \phi(i \wedge j) - \phi(j \wedge k) = 
1 \wedge j \wedge k \wedge ij - 1 \wedge j \wedge i \wedge jk = 0 \]
so that $i \wedge j$ is orthogonal to $k \wedge i$, and similarly that $i \wedge j$ is orthogonal to $j \wedge k$.  

Conversely, suppose that the ternary quadratic module $(M,I,q)$ has $M=Q \oplus N$ an orthogonal direct sum.  From the expression (\ref{whoocliffnelly}) we compute that that
\[ C^0(M,I,q) \cong \frac{R \oplus (N \otimes N \otimes I\spcheck)}{J^0(q|_N)} \oplus 
(N \otimes Q)=S \oplus J \]
as $R$-modules, where $J^0(q|N)$ is the $R$-module generated by elements of the form 
\[ x \otimes x \otimes f - 1 \otimes f(q(x)) \] 
for $x \in N$ and $f \in I\spcheck$.  To verify that $J$ is an invertible $S$-module and that the multiplication map on $J$ gives a Hermitian form $b:J \times J \to S$, it suffices to prove this locally, whence we may assume that $N=Re_1 \oplus Re_2$ and $Q=Re_3$ as in Example \ref{Qfree}, so that
\[ q(xe_1+ye_2+ze_3)=q(x,y,z)=ax^2+by^2+cz^2+wxy \]
where by orthogonality we have $u=v=0$.  Then in the above we have $1,k$ a basis for $S$ and $i,j$ a basis for $J$ as an $R$-module.  Thus, the multiplication laws ($Q$) read
\begin{align*}
i^2 &= -bc & jk &= a\overline{i}=-ai & kj &= ai + wj \\
j^2 &= -ac & ki &= b\overline{j}=-bj & ik &= wi + bj \\
k^2 &= wk-ab & ij &= c\overline{k}=cw-ck & ji &= ck.
\end{align*}
This verifies that $SJ \subset J$, so $J$ is an $S$-module, that $J$ is traceable, and that the multiplication map on $J$ yields a Hermitian form on $J$, e.g.\ $ik=\overline{k}i$ and $jk=\overline{k}j$, and $ij=c\overline{k}=\overline{ck}=\overline{ji}$.
\end{proof}

\begin{rmk}
Wood \cite{MWood}[Chapter 2] has shown that there is a bijection between isometry classes of binary quadratic modules and isomorphism classes of pairs $(S,J)$ where $S$ is a quadratic $R$-algebra and $J$ is a traceable $S$-module.  

There is a natural map from binary quadratic modules to ternary quadratic modules, where we associated to a binary quadratic module $q:N \to I$ the ternary quadratic module $q \oplus \la 1 \ra:N \oplus R \to I$ where $(q \oplus \la 1 \ra)(x,r) = q(x) + r^2$ for $x \in N$ and $r \in R$.  This associated ternary quadratic module then gives rise in the correspondence of Theorem B to the crossed product $B \cong \quat{S,J,b}{R}$ and the parity factorization $(\bigwedge^2 N \otimes (I\spcheck)^{\otimes 2})^{\otimes 2} \otimes I \xrightarrow{\sim} \bigwedge^4 B$, where $(S,J)$ is as associated by Wood.  The Hermitian form $b:J \times J \to S$ is defined locally as above with $c=1$.  In these associations, the isometry class of the binary quadratic module determines that of the ternary quadratic module and correspondingly the isomorphism classes of the traceable module as well as the quaternion ring (equipped with its specified parity factorization).

This correspondence was worked out partially by Wood (private communication).
\end{rmk}

\end{document}